\newcommand{\dist}{\operatorname{dist}}
\def\div{\mathfrak{Div}}
\def\r{\mathbb{R}}
\def\n{\mathbb{N}}
\def\c{\mathbb{C}}
\def\d{\mathbb{D}}
\def\z{\mathbb{Z}}
\def\k{\mathbb{K}}
\def\Ncal{\mathcal{N}}
\def\Mcal{\mathcal{M}}
\def\Ccal{\mathcal{C}}
\def\Fcal{\mathcal{F}}
\def\Ical{\mathcal{I}}
\def\hgot{\mathfrak{h}}
\def\mgot{\mathfrak{m}}
\def\ggot{\mathfrak{g}}
\newtheorem{theorem}{Theorem}[section]
\newtheorem{assertion}[theorem]{Claim}
\newtheorem{lemma}[theorem]{Lemma}
\newtheorem{corollary}[theorem]{Corollary}
\theoremstyle{definition}
\newtheorem{remark}[theorem]{Remark}
\newtheorem{definition}[theorem]{Definition}
\numberwithin{equation}{section}
\numberwithin{figure}{section}
\begin{document}

\title[Minimal surfaces in $\r^3$  properly projecting into $\r^2$]
{Minimal surfaces in $\r^3$ properly projecting into $\r^2$}

\author[A.~Alarc\'{o}n]{Antonio Alarc\'{o}n}
\address{Departamento de Geometr\'{\i}a y Topolog\'{\i}a \\
Universidad de Granada \\ E-18071 Granada \\ Spain}
\email{alarcon@ugr.es}

\author[F.J.~L\'{o}pez]{Francisco J. L\'{o}pez}
\address{Departamento de Geometr\'{\i}a y Topolog\'{\i}a \\
Universidad de Granada \\ E-18071 Granada \\ Spain}
\email{fjlopez@ugr.es}


\thanks{Research of both authors is partially
supported by MCYT-FEDER research project MTM2007-61775 and Junta
de Andaluc\'{i}a Grant P09-FQM-5088} \subjclass[2000]{53A10;
49Q05, 49Q10, 53C42} \keywords{Proper minimal surfaces, Riemann
surfaces of arbitrary conformal structure}

\begin{abstract}
For all open Riemann surface $\mathcal{ N}$ and  real number $\theta \in (0,\pi/2),$  we construct a conformal minimal immersion $X=(X_1,X_2,X_3):\mathcal{ N} \to \r^3$ such that $X_3+\tan(\theta) |X_1|:\mathcal{ N} \to \r$ is positive and  proper. Furthermore, $X$ can be chosen with arbitrarily prescribed flux map.

Moreover, we produce properly immersed hyperbolic minimal surfaces with non empty boundary in $\r^3$ lying above a negative sublinear graph.
\end{abstract}

\maketitle

\thispagestyle{empty}

\section{Introduction}\label{sec:intro}
The conformal structure of a complete minimal surface plays a fundamental role in its global properties. It is then  important to determine the conformal type of a given minimal surface. An  open Riemann surface is said to be {\em hyperbolic} if an only if it carries a negative non-constant subharmonic function. Otherwise, it is said to be {\em parabolic}. Compact Riemann surfaces with empty boundary are said to be {\em elliptic}.

Complete minimal surfaces with finite total curvature or complete embedded minimal surfaces with finite topology in $\r^3$ are properly immersed and have parabolic conformal type (for further  information, see \cite{osserman, jorge-meeks, c-m1, m-p-r,m-p}). On the other hand, there exist properly immersed hyperbolic minimal surfaces in $\r^3$ with arbitrary non-compact topology (see \cite{mo} for a pioneering work, and \cite{f-m-m,AL} and references therein for a good setting).

It is then interesting to elucidate how properness and completeness influence the conformal geometry of minimal surfaces. In \cite{lop1}  it is shown that any open Riemann surface admits a conformal complete minimal immersion in $\r^3,$ even with arbitrarily prescribed flux map. In this paper we extend this result to the family of proper minimal immersions, proving considerably more (see Theorem \ref{th:fun}):

\begin{quote}
{\bf Theorem I.} {\em For all open Riemann surface $\mathcal{ N}$, group morphism $p:\mathcal{ H}_1(\mathcal{ N},\z) \to \r^3$  and real number $\theta \in (0,\frac{\pi}{2}),$ there exists a conformal minimal immersion $X=(X_1,X_2,X_3):\mathcal{ N} \to \r^3$ satisfying that:
\begin{itemize}
\item $X_3+\tan(\theta) |X_1|:\mathcal{ N} \to \r$ is positive and  proper, and
\item $\int_\gamma \partial X=i p(\gamma)$ for all $\gamma \in \mathcal{ H}_1(\mathcal{ N},\z),$ where $\partial$ is the complex differential operator.
\end{itemize}}
\end{quote}

The strength of the theorem lies in the case $\theta\approx 0.$ As a matter of fact, if the theorem holds for some $\theta_0\in (0,\pi/2)$ then it is trivially valid for  any  $\theta\in [\theta_0,\pi/2).$ Furthermore, the result is sharp in the  sense that the angle $\theta$ cannot be zero. Indeed, by the Strong Half Space Theorem \cite{h-m} properly immersed minimal surfaces in a half space are planes.  Contrariwise, Theorem I shows that any wedge of angle greater than $\pi$ in $\r^3$ contains minimal surfaces properly immersed in $\r^3,$ even of   hyperbolic type. In particular, neither open wedges nor closed wedges of angle greater than $\pi$ are universal regions for surfaces (see \cite{mp1} for a good setting). Other Picard conditions for properly immersed minimal surfaces in $\r^3$ guaranteeing parabolicity can be found in
\cite{lop3}.

From Theorem I follow some remarkable results concerning not only
minimal surfaces. We are going to mention three of them related to
proper harmonic maps into $\c,$ proper holomorphic null curves in
$\c^3$ and maximal surfaces in the Lorentz-Minkowski space
$\r^3_1.$

Schoen and Yau conjectured that there are no proper harmonic maps
from $\d$ to $\c$ with flat metrics, and connected this question
with the existence of hyperbolic minimal surfaces in $\r^3$
properly projecting into $\r^2$ \cite[p.$\,18$]{s-y}. A
counterexample to this conjecture follows from the results in
\cite{DF}, which imply the existence of proper harmonic maps from
any finite bordered Riemann surface into $\r^2.$ It
remains open whether or not a hyperbolic minimal surface in $\r^3$
can be properly projected into $\r^2.$  The following direct corollary of Theorem I provides a full answer to Schoen and Yau's questions:
\begin{quote}
{\bf Corollary.} {\em Any open Riemann surface $\mathcal{ N}$ admits a conformal minimal immersion $X=(X_1,X_2,X_3):\mathcal{ N} \to \r^3$ such that $(X_1,X_3):\mathcal{N}\to\r^2$ is a proper (harmonic) map.}
\end{quote}

It is well known that any open Riemann surface properly
holomorphically embeds in $\c^3$ and immerses in $\c^2$ \cite{bis,
nar, rem}. Moreover, there are proper null immersions in $\c^3$ of
the unit disc \cite{mo}, and of any open parabolic Riemann surface
of finite topology \cite{pirola,lop1}. Theorem I also shows that
any open Riemann surface admits a proper {\em null} immersion in
$\c^3,$ and a holomorphic immersion in $\c^2$ properly projecting
into $\r^2.$ Indeed, choosing $p=0$ in Theorem I and labeling
$X^*=(X_1^*,X_2^*,X_3^*)$ as the conjugate minimal immersion of
$X,$ the map $X+i X^*=(F_1,F_2,F_3):\mathcal{ N} \to \c^3$ is a
proper holomorphic null immersion, and  $(F_1,F_3):\mathcal{ N}
\to \c^2$ is a holomorphic immersion which properly projects into
$\r^2.$

Finally, from Theorem I follows the existence of proper Lorentzian null holomorphic immersions in $\c^3$ (see \cite{uy}) and proper conformal
maximal immersions in the Lorentz-Minkowski space, with singularities and arbitrary conformal structure. See \cite{alar} for the hyperbolic
simply connected case.

The last part of the paper is devoted to properly immersed minimal surfaces in $\r^3$ with  non-empty boundary. A Riemann surface $M$ with {\em non-empty boundary} is said to be {\em parabolic} if bounded harmonic functions on $M$ are determined by their boundary values, or equivalently, if the harmonic  measure of $M$ with respect to a point $P\in M-\partial(M)$ is full on $\partial(M).$ Otherwise, the surface is said to be {\em hyperbolic} (see \cite{ahlfors, P} for a good setting).  For instance, $\overline{\d}-\{1\}$ is parabolic whereas $\overline{\d}_+:=\overline{\d} \cap \{z \in \c\,|\, \mbox{Im}(z)>0\}$ is hyperbolic.  Properly immersed minimal surfaces with non-empty boundary lying in a half space of $\r^3$ are parabolic \cite{c-k-m-r}, and the same result holds for proper minimal graphs in $\r^3$ \cite{neel}. It is also known that any properly immersed minimal surface in $\r^3$ with non-empty boundary lying over a negative sublinear graph in $\r^3$  and whose Gaussian image is contained in a hyperbolic domain of the Riemann sphere is parabolic \cite{l-p}. We prove the following complementary result (see Theorem \ref{th:sub}), which also shows that the condition about the size of the Gauss map in  \cite{l-p} plays an important role:
\begin{quote}
{\bf Theorem II.} {\em There exists a conformal minimal immersion  $X=(X_1,X_2,X_3): \overline{\d}_+ \to \r^3$ such that $(X_1,X_3): \overline{\d}_+ \to \r^2$ is proper  and $\lim_{n \to \infty} \min \{\frac{X_3(p_n)}{|X_1(p_n)|+1},0\}=0$ for all divergent sequence $\{p_n\}_{n \in \n}$ in $ \overline{\d}_+.$}
\end{quote}

Theorem II contributes to the understanding of Meeks' conjecture about parabolicity of minimal surfaces with boundary. This conjecture asserts that any properly immersed minimal surface lying above a negative half catenoid is parabolic.

The techniques developed in this paper may be applied to a wide
range of problems on minimal surface theory. In the papers
\cite{AFL, AFL2} complete minimal surfaces in $\r^N$ with prescribed coordinate functions are
constructed, and in \cite{AL} some Calabi-Yau type conjectures are treated. Our tools come from deep results on approximation
theory by meromorphic functions \cite{sche1, sche2, roy}. The most
useful one is the Approximation Lemma in Section
\ref{sec:aproxi}, where  accurate use of Runge-Mergelyan approximation
theorems and classical theory of Riemann surfaces \cite{ahlfors,
farkas} is made. In this way, we can refine the classical
construction methods of complete minimal surfaces (see, among
others, \cite{jorge-xavier, nadi, lop-mar-mo} for a good
setting).

The paper is laid out as follows. In Section \ref{sec:riemann} we
introduce the necessary background on Riemann surfaces and the required notations for a well understanding of the subsequent sections.  Section \ref{sec:wei} is devoted to
some preliminaries on minimal surfaces in $\r^3.$ 
In Section \ref{sec:aproxi} we state and prove the Approximation Lemma. Finally,
Theorems I and II are proved in Sections \ref{sec:fun} and
\ref{sec:sub}, respectively.

\section{Background on Riemann Surfaces} \label{sec:riemann}
Given a compact topological space $K$ and $f=(f_j)_{j=1,\ldots,n}:K\to\mathbb{K}^n,$ $\mathbb{K}=\r,$ $\c,$  we denote by $$\|f\|_{0,K}:= \max_{K} \big\{ \big(\sum_{j=1}^n |f_j|^2\big)^{1/2}\big\}$$ the maximum norm of $f$ on $K.$ The corresponding space of continuous functions on $K$ will be endowed with the $\Ccal^0$ topology associated to $\|\cdot\|_{0,K}.$

Given a topological surface $N,$  $\partial(N)$ will denote the one dimensional topological manifold determined by the boundary points of $N.$ Given $A \subset N,$ call by $A^\circ$ and $\overline{A}$  the interior and the closure  of $A$  in $N,$ respectively. Open connected subsets of $N-\partial(N)$ will be called {\em domains}, and those proper connected topological subspaces of $N$ being surfaces with boundary are said to be  {\em regions}.

A Riemann surface $M$ is said to be {\em open} if it is non-compact and $\partial(M)=\emptyset.$ As usual, $\overline{\c}=\c \cup \{\infty\}$ will denote the Riemann sphere. We denote  $\partial$ as the global complex operator given by $\partial|_U=\frac{\partial}{\partial z} dz$ for any conformal chart $(U,z)$ on $M.$

\begin{remark}\label{rem:inicio}
Throughout this paper $\mathcal{ N}$ and $\sigma_{\mathcal{N}}^2$ will denote a fixed but arbitrary open Riemann surface and conformal Riemannian metric on it.
\end{remark}

In the following, we introduce the necessary  notations for a well understanding of Sections \ref{sec:wei} and \ref{sec:aproxi}.

For any $A \subset \mathcal{N},$ we denote by $\div(A)$  the free commutative group of divisors of $A$ with multiplicative notation. If $D=\prod_{i=1}^n Q_i^{n_i} \in \div(S),$ where $n_i \in \z-\{0\}$ for all $i,$ the set $\{Q_1,\ldots,Q_n\}$ is said to be the support of $D,$ written $\mbox{supp}(D).$
A divisor $D \in \div(A)$ is said to be {\em integral} if $D=\prod_{i=1}^n Q_i^{n_i}$ and $n_i\geq 0$ for all $i.$ Given $D_1,$ $D_2 \in \div(A),$ $D_1 \geq D_2$ if and only if $D_1 D_2^{-1}$ is  integral.

Given an open subset  $W\subset \mathcal{N},$ we write ${\mathcal{ F}_\hgot}(W)$ and ${\mathcal{ F}_\mgot}(W)$ for the spaces of holomorphic and meromorphic functions on $W,$ respectively. Likewise, $\Omega_\hgot(W)$ and $\Omega_\mgot(W)$ will denote the spaces of holomorphic and meromorphic 1-forms on $W,$ respectively.

Let $S$ be a {\em compact subset} of $\mathcal{ N}.$ By definition, a connected component $V$ of $\mathcal{N}-S$ is said to be {\em bounded}  if $\overline{V}$ is compact. $S$ is said to be {\em Runge}  if $\mathcal{N}-S$ has no bounded components. Recall that a compact Jordan arc in $\mathcal{ N}$ is said to be analytical (smooth, continuous,...) if it is contained in an open analytical (smooth, continuous,...) Jordan arc in $\mathcal{ N}.$
\begin{definition}
A (possibly non-connected) compact subset $S\subset \mathcal{N}$ is said to be admissible  if and only if (see Figure \ref{fig:admi}):
\begin{enumerate}[(a)]
\item $S$ is Runge,
\item $M_S:=\overline{S^\circ}$ is non-empty   and consists of a finite collection of pairwise disjoint compact regions in $W$ with   $\mathcal{ C}^0$ boundary,
\item $C_S:=\overline{S-M_S}$ consists of a finite collection of pairwise disjoint analytical Jordan arcs, and
\item any component $\alpha$ of $C_S$  with an endpoint  $P\in M_S$ admits an analytical extension $\beta$ in $\mathcal{N}$ such that the unique component of $\beta-\alpha$ with endpoint $P$ lies in $M_S.$
\end{enumerate}
\end{definition}
\begin{figure}[ht]
    \begin{center}
    \scalebox{0.30}{\includegraphics{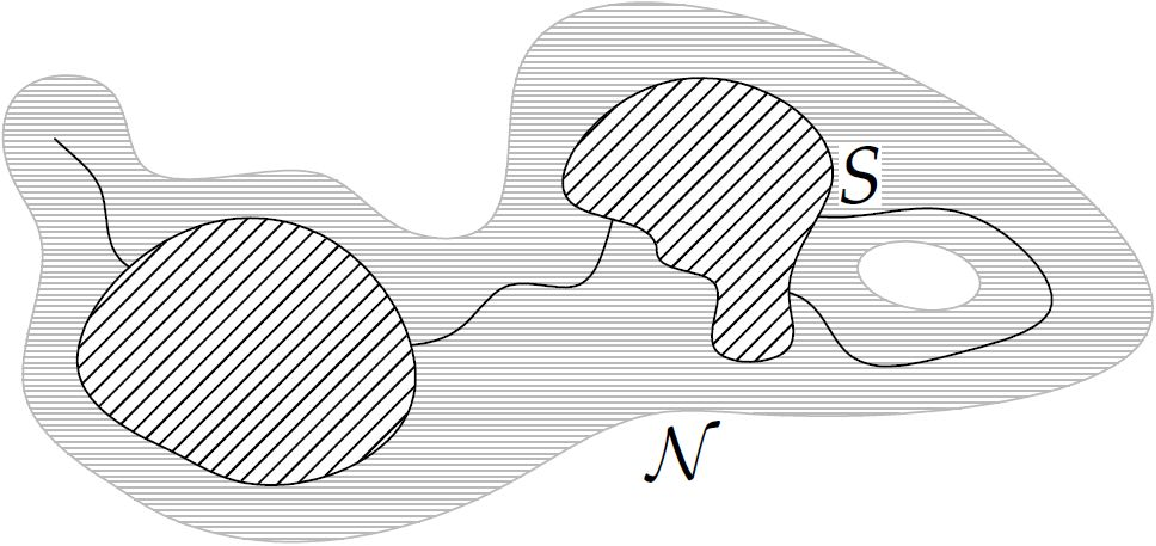}}
        \end{center}
        \vspace{-0.5cm}
\caption{An admissible set $S.$}\label{fig:admi}
\end{figure}
A compact subset $S\subset \mathcal{N}$ satisfying (b), (c) and (d)  is Runge (hence admissible) if and only if $i_*:\mathcal{H}_1(S,\z) \to \mathcal{H}_1(\mathcal{N},\z)$ is a monomorphism, where $\mathcal{H}_1(\cdot,\z)$ means first homology group, $i:S \to \mathcal{N}$  is the inclusion map  and $i_*$ is the induced group morphism. Elementary topological arguments give that $\mathcal{H}_1(S,\z)$  is finitely generated and  $\chi(M_S)\geq \chi(S) \geq \chi(M_S)-k$ for any admissible $S,$ where $\chi(\cdot)$ means Euler characteristic and $k$ is the number of Jordan arcs in $C_S.$ In particular, $\chi(S)$ is finite.

Notice that if $S\subset \mathcal{N}$ is a compact Runge subset  consisting of a finite collection of pairwise disjoint compact regions with $\mathcal{ C}^0$ boundary, then $S$  is admissible. For most of the admissible subsets $S$ we will deal with in this paper, $M_S$ will have smooth (or even analytical) boundary and the arcs in $C_S$ will meet transversally $\partial (M_S).$

In the sequel, $S$ will denote an admissible set. 
\begin{definition} We denote by
\begin{itemize}
\item ${\mathcal{ F}_\hgot}(S)$ the space of continuous functions $f:S \to{\c}$  being holomorphic on an open neighborhood  of $M_S$ in $\mathcal{ N},$ and
\item $\mathcal{ F}_\mgot(S)$ the space of continuous functions $f:S \to \overline{\c}$ being meromorphic on an open neighborhood  of $M_S$ in $\mathcal{ N}$ and satisfying that  $f^{-1}(\infty)\subset S^\circ=M_S-\partial(M_S).$ 
\end{itemize}
\end{definition}
As usual, a 1-form $\theta$ on $S$ is said to be of type $(1,0)$ if for any conformal chart $(U,z)$ in $ \mathcal{ N},$ $\theta|_{U \cap S}=h(z) dz$ for some function $h:U \cap S \to \overline{\c}.$
Finite sequences $\Theta=(\theta_1,\ldots,\theta_n),$ where $\theta_j$ is a $(1,0)$-type 1-form for all $j,$ are said to be $n$-dimensional vectorial $(1,0)$-forms on $S.$   The space of continuous $n$-dimensional (1,0)-forms on $S$ will be endowed with the $\Ccal^0$ topology induced by the norm 
\begin{equation} \label{eq:norma0}
\|\Theta\|_{0,S}:=\|\frac{\Theta}{\sigma_\Ncal}\|_{0,S}=\max_{S} \big\{ \big(\sum_{j=1}^n |\frac{\theta_j}{\sigma_\Ncal}|^2\big)^{1/2}\big\} .
\end{equation}

Fix any arbitrary  meromorphic 1-form $\vartheta_S$ on $\Ncal$ with neither zeros nor poles on $S$ (the existence of a such $\vartheta_S$ is well known, it follows from Riemann-Roch theorem on open Riemann surfaces). Notice that the following notions will not depend on the chosen $\vartheta_S.$
 
\begin{definition} We denote by
\begin{itemize}
\item $\Omega_\hgot(S)$ the space of 1-forms $\theta$ of type $(1,0)$ on $S$ such that $\theta/\vartheta_S \in \mathcal{ F}_\hgot(S),$ and 
\item $\Omega_\mgot(S)$ the space of 1-forms $\theta$ of type $(1,0)$ on $S$ such that $\theta/\vartheta_S\in \mathcal{ F}_\mgot(S).$  
\end{itemize}
\end{definition}
The inclusions $\mathcal{F}_\hgot(S)\subset  \mathcal{F}_\mgot(S)$ and $\Omega_\hgot(S)\subset  \Omega_\mgot(S)$ are trivial.

For any $f \in \mathcal{ F}_\mgot(S)$ we denote by $(f)_0$ and $(f)_\infty$ its associated integral divisors of zeroes and poles in $S,$ respectively, and label $(f)=\frac{(f)_0}{(f)_\infty}$ as the divisor associated to $f$ on $S.$ Obviously, $\mbox{supp}((f)_\infty)=f^{-1}(\infty)$ and $\mbox{supp}((f)_0)=f^{-1}(0).$ Likewise we define $(\theta)_0$, $(\theta)_\infty$  for any  $\theta \in \Omega_\mgot(S)$ and call $(\theta)=\frac{(\theta)_0}{(\theta)_\infty}$ as the  divisor of $\theta$ on $S.$ 

\begin{definition}\label{def:omega-t} Let $W$ be an open subset of $\mathcal{N}$ containing $S.$ We shall say that
\begin{itemize}
\item  a function $f \in \mathcal{ F}_\hgot(S)$ can be approximated in the $\Ccal^0$ topology on $S$ by functions in ${\mathcal{ F}_\hgot}(W)$ if
there exists $\{f_n\}_{n \in \n} \subset  {\mathcal{ F}_\hgot}(W)$ such that $\{\|f_n|_S-f\|_{0,S}\}_{n \in \n} \to 0,$
\item a function $f \in \mathcal{ F}_\mgot(S)$ can be approximated in the $\Ccal^0$ topology on $S$ by functions in $\mathcal{F}_\mgot(W)$ if
there exists $\{f_n\}_{n \in \n} \subset  {\mathcal{ F}_\mgot}(W)$ such that $f_n|_S-f \in {\mathcal{ F}_\hgot}(S)$ for all $n$ and $\{\|f_n|_S-f\|_{0,S}\}_{n \in \n} \to 0$ (in particular, $(f_n)_\infty=(f)_\infty$ on $S^\circ$ for all $n$),
\item  a 1-form $\theta \in \Omega_\hgot(S)$ can be approximated in the $\Ccal^0$ topology on $S$ by 1-forms in $\Omega_\hgot(W)$ if
there exists  $\{\theta_n\}_{n \in \n} \subset \Omega_\hgot(W)$ such that $\{\|\theta_n|_S-\theta\|_{0,S}\}_{n \in \n} \to 0,$ and 
\item a 1-form $\theta \in \Omega_\mgot(S)$ can be approximated in the $\Ccal^0$ topology on $S$ by 1-forms in $\Omega_\mgot(W)$ if
there exists  $\{\theta_n\}_{n \in \n} \subset \Omega_\mgot(W)$ such that $\theta_n|_S-\theta \in \Omega_\hgot(S)$ for all $n$ and $\{\|\theta_n|_S-\theta\|_{0,S}\}_{n \in \n} \to 0$  (in particular $(\theta_n)_\infty=(\theta)_\infty$  on $S^\circ$ for all $n$).      
\end{itemize}
\end{definition}
The notion of approximation in the $\Ccal^0$ topology of vectorial functions in $\Fcal_\hgot(S)^n$ (respectively,  1-forms in $\Omega_\hgot(S)^n$) by vectorial functions in $\Fcal_\hgot(W)^n$ (respectively,  1-forms in $\Omega_\hgot(W)^n$) is set in a similar way. Likewise for the spaces $\Fcal_\mgot(S)^n$ and $\Omega_\mgot(S)^n.$

The following definition deals with the notion of smoothness of functions and 1-forms on admissible subsets. 

\begin{definition} \label{def:smooth}
Let $S$ be a compact admissible subset in $\mathcal{N}.$ 
\begin{itemize}
\item A function $f:S\to\k^n,$ $\k=\r,$ $\c,$ or $\overline{\c},$ $n\in\n,$ is said to be smooth if $f|_{M_S}$  admits a smooth extension $f_0$ to an open domain $V$ in $\mathcal{N}$ containing $M_S,$ and for any component $\alpha$ of $C_S$ and any open analytical Jordan arc $\beta$ in $W$ containing $\alpha,$  $f|_\alpha$ admits a smooth extension $f_\beta$ to $\beta$ satisfying that $f_\beta|_{V \cap \beta}=f_0|_{V \cap \beta}.$ 
\item A vectorial 1-form $\Theta\in \Omega_\mgot(S)^n$ is said to be smooth if $\Theta/\vartheta_S:S \to \overline{\c}^n$ is smooth.
\end{itemize}
\end{definition}

\begin{definition}
Given a smooth $f\in\mathcal{ F}_\mgot(S),$ we set $df$ as the 1-form of type (1,0)  given by 
$$df|_{M_S}=d (f|_{M_S})\quad \text{and}\quad df|_{\alpha \cap U}=(f \circ \alpha)'(x)dz|_{\alpha \cap U}$$ for any component $\alpha$ of $C_S,$ 
where $(U,z=x+i y)$ is any conformal chart on $\mathcal{N}$ satisfying that $z(\alpha \cap U)\subset \r$ (the existence of such a conformal chart is guaranteed by the analyticity of $\alpha$). 
\end{definition}

It is clear that $df$ is well defined, belongs to $\Omega_\mgot(S)$ (to $\Omega_\hgot(S)$ if $f\in\mathcal{F}_\hgot(S)$) and is smooth. Furthermore, $df|_\alpha(t)= (f\circ\alpha)'(t) dt$ for any component $\alpha$ of $C_S,$ where $t$ is any smooth parameter along $\alpha.$ 

A smooth 1-form $\theta \in \Omega_\mgot(S)$ is said to be {\em exact} if $\theta=df$ for some smooth $f \in \mathcal{ F}_\mgot (S),$ or equivalently if $\int_\gamma \theta=0$ for all $\gamma \in \mathcal{ H}_1(S,\z).$

\section{Weierstrass Representation  and Flux Map of Minimal Surfaces} \label{sec:wei}
Let $R$ be an open Riemann surface and let $X=(X_1,X_2,X_3):R \to \r^3$ be a conformal minimal immersion. Denote by $\phi_j=\partial X_j,$ $j=1,2,3,$ and $\Phi=\partial X\equiv (\phi_j)_{j=1,2,3}.$  The 1-forms  $\phi_k$ are holomorphic,  have no real periods and satisfy that  $\sum_{k=1}^3 \phi_k^2=0.$ Furthermore,  the intrinsic metric in $R$ is given by
$ds^2=\sum_{k=1}^3 |\phi_k|^2,$ hence $\phi_k,$ $k=1,2,3,$ have no common zeroes.

Conversely, any vectorial holomorphic 1-form $\Phi=(\phi_1,\phi_2,\phi_3)$ on $R$ without real periods and satisfying that $\sum_{k=1}^3 \phi_k^2=0$  and $\sum_{k=1}^3 |\phi_k(P)|^2\neq 0$ for all $P \in R,$ determines a conformal minimal immersion $X:R \to \r^3$ by the expression:
$$X=\mbox{Re} \int \Phi.$$ By definition, the triple $\Phi$ is said to be the Weierstrass representation of $X.$ The meromorphic function $g=\frac{\phi_3}{\phi_1-i \phi_2}$ corresponds to the Gauss map of $X$ up to the stereographic projection and $$\Phi=\big(\frac{1}{2}(1/g-g),\frac{i}{2}(1/g+g),1\big)\phi_3,$$ see \cite{osserman}.

We need the following
\begin{definition}
For any subset $A\subset \Ncal,$  we denote by $\mathcal{ M}(A)$ the space of conformal minimal immersions of  open domains  $W\subset \Ncal$ containing $A$ into $\r^3.$ 
\end{definition}

Let $S \subset \mathcal{ N}$ be a compact  admissible subset. 
\begin{definition} \label{def:conor}
Given $X\in \mathcal{ M}(S)$  and an arclength parameterized curve  $\gamma(s)$ in $S,$ the {\em conormal vector field}  of $X$ along $\gamma$ is the unique unitary tangent vector field  $\mu$ of $X$ along $\gamma$ such that $\{d X(\gamma'(s)),\mu(s)\}$ is a positive basis for all $s.$ If in addition $\gamma$ is closed, the {\em flux} $p_X(\gamma)$ of $X$ along $\gamma$ is given by $\int_\gamma \mu(s) ds.$ 
\end{definition}
It is easy to check that $p_X(\gamma)=\mbox{Im}\int_{\gamma} \partial X$ and that the flux map $p_X:\mathcal{ H}_1(M,\z)\to \r^3$ is a group morphism.

\begin{definition}
A smooth map $X:S \to \r^3$ (see Definition \ref{def:smooth}) is said to be a {\em generalized minimal immersion} if $X|_{M_S} \in \mathcal{ M}(M_S)$ and $X|_{C_S}$ is regular, that is to say, if $X|_\alpha$ is a regular curve for all $\alpha \subset C_S.$
We denote by $\mathcal{M}_\ggot(S)$ the space of generalized minimal immersions of $S$ into $\r^3.$
\end{definition}
It is clear that $Y|_{S}\in \mathcal{M}_\ggot(S)$  for all $Y \in \mathcal{ M}(S).$ 

Consider $X \in \mathcal{ M}_\ggot(S)$  and let $\varpi:C_S \to \r^3$ be a {\em smooth normal field} along $C_S$ respect to $X.$ This simply means that for any (analytical) arclength parameterized $\alpha (s) \subset C_S,$   $\varpi(\alpha(s))$ is smooth, unitary and orthogonal to $(X|_\alpha)'(s),$    $\varpi$ extends smoothly to any open analytical arc $\beta$ in $W$ containing $\alpha$ and $\varpi$ is tangent to $X$ on $\beta \cap S.$  The normal field $\varpi$ is said to be {\em orientable} respect to $X$ if for any component $\alpha \subset C_S$ having endpoints $P_1,$ $P_2$ lying in $\partial(M_S),$    the basis $B_i=\{(X|_\alpha)'(s_i), \varpi(s_i)\}$ of the tangent plane of $X|_{M_S}$ at $P_i,$  $i=1,2,$ are both positive or negative (with respect to the orientation of $\mathcal{N}$), where $s_i$ is the value of the arclength parameter $s$ for which  $\alpha(s_i)=P_i,$ $i=1,2.$

The following objects will play a crucial role in the statement of our approximation results by minimal surfaces (see Theorem \ref{co:immaprox} in Section \ref{sec:aproxi}).

\begin{definition}
We call  $\mathcal{ M}_\ggot^*(S)$ as the  space of marked immersions  $X_\varpi:=(X,\varpi),$ where $X \in \mathcal{ M}_\ggot(S)$ and $\varpi$ is an  orientable smooth normal field along $C_S$ respect to $X.$
\end{definition}


Given $X_\varpi \in \mathcal{ M}_\ggot^*(S),$ let $\partial
X_\varpi=(\hat{\phi}_j)_{j=1,2,3}$ be the complex vectorial
``1-form"  on $S$ given by  $\partial
X_\varpi|_{M_S}=\partial (X|_{M_S}),$ $\partial X_\varpi(\alpha'(s))= dX
(\alpha'(s)) + i \varpi(s),$ where $\alpha$ is a component of
$C_S$ and $s$ is the arclength parameter of $X|_\alpha$  for
which $\{dX (\alpha'(s_i)), \varpi(s_i)\}$ are positive, where as above
$s_1$ and $s_2$ are the values of $s$ for which  $\alpha(s) \in \partial(M_S).$ If $(U,z=x+i y)$ is a
conformal chart on $\mathcal{N}$ such that $\alpha \cap U=z^{-1}(\r \cap
z(U)),$ it is clear that $(\partial X_\varpi)|_{\alpha \cap
U}=\big[dX (\alpha'(s)) + i \varpi(s)\big]s'(x)dz|_{\alpha \cap
U},$ hence
 $\partial X_\varpi \in \Omega_\hgot(S)^3.$ Furthermore, $\hat{g}=\hat{\phi}_3/(\hat{\phi}_1-i\hat{\phi}_2)\in \mathcal{ F}_\mgot(S)$
 provided that $\hat{g}^{-1}(\infty)\subset S^\circ.$

Obviously, $\hat{\phi}_j$ is smooth on $S,$ $j=1,2,3,$ and the
same occurs for $\hat{g}.$ Notice that $\sum_{j=1}^3 \hat{\phi}_j^2=0,$ $\sum_{j=1}^3 |\hat{\phi}_j|^2$ never vanishes on $S$ and $\mbox{Re} (\hat{\phi}_j)$ is an ``exact" real 1-form on $S,$
$j=1,2,3,$ hence we also have $X(P)=X(Q)+\mbox{Re} \int_{Q}^P
(\hat{\phi}_j)_{j=1,2,3},$ $P,$ $Q \in S.$
 For these reasons, $(\hat{g},\hat{\phi}_3)$ will be called as the generalized ``Weierstrass data" of $X_\varpi.$
As $X|_{M_S} \in \mathcal{ M} (M_S),$  then $(\phi_j)_{j=1,2,3}:=(\hat{\phi}_j|_{M_S})_{j=1,2,3},$ and $g:=\hat{g}|_{M_S}$ are  the  Weierstrass data and the meromorphic Gauss map of $X|_{M_S},$ respectively.

The space $\mathcal{M}_\ggot^*(S)$ is naturally endowed with the following $\Ccal^1$ topology:

\begin{definition}\label{def:C1}
Given $X_{\varpi},$ $Y_{\xi}\in\mathcal{M}_\ggot^*(S),$ we set
\[
\|X_{\varpi}-Y_{\xi}\|_{1,S}:=\|X-Y\|_{0,S}+\big\|\partial X_{\varpi}-\partial Y_{\xi}\big\|_{0,S} \;\;(\text{see \eqref{eq:norma0}}).
\]

Given $F \in \Mcal(S)$, we denote by $\varpi_F$ the conormal field of $F$ along  $C_S.$ Notice that $(\partial F)|_S=\partial F_{\varpi_F},$ where $F_{\varpi_F}:=(F|_S,\varpi_F) \in \Mcal_\ggot^*(S).$ If $F,$ $G\in \Mcal(S),$ we set $$\|F-X_{\varpi}\|_{1,S}:=\|F_{\varpi_F}-X_{\varpi}\|_{1,S}\quad \text{and} \quad \|F-G\|_{1,S}:=\|F_{\varpi_F}-G_{\varpi_G}\|_{1,S} .$$
\end{definition}

\begin{definition} Let $W$ be an open subset of $\mathcal{N}$ containing $S.$ We shall say that a marked immersion $X_\varpi \in \mathcal{M}_\ggot^*(S)$ can be  approximated in the $\Ccal^1$ topology on $S$ by conformal minimal immersions in $\mathcal{M}(W)$ if for any $\epsilon>0$ there exists  $Y\in  \mathcal{M}(W)$ such that $\|Y-X_\varpi\|_{1,S}<\epsilon.$ 
\end{definition}

The group homomorphism  $$p_{X_\varpi}:\mathcal{ H}_1(S,\z) \to \r^3, \quad p_{X_\varpi}(\gamma)=\mbox{Im} \int_\gamma \partial X_\varpi,$$ is said to be the {\em generalized flux map} of $X_\varpi.$ Obviously,  $p_{X_{\varpi_Y}}=p_Y|_{\mathcal{ H}_1(S,\z)}$ provided that  $X=Y|_{S}.$


\section{The Approximation Lemmas} \label{sec:aproxi}

The aim of this section is to obtain an approximation result for marked minimal immersions on admissible subsets by minimal immersions defined on an arbitrary larger domain of finite topology (see Theorem \ref{co:immaprox} below). 

Throughout this section,  $W$  will denote a domain of finite topology in $ \mathcal{ N}$ and $S$ an  admissible compact subset contained in $W.$

Several extensions of classical Runge-Mergelyan theorems can be found in \cite{roy,sche1,sche2}. For our purposes, we need only the following compilation result:

\begin{theorem} \label{th:runge} For any  $f \in \mathcal{ F}_\mgot(S)$ and integral divisor $D \in \div(S)$ with  $\mbox{supp}(D)\subset S^\circ,$ there exists $\{f_n\}_{n \in \n} \in \mathcal{ F}_\mgot(W)$ such that $f_n|_S-f \in \Fcal_\hgot(S)$  and $\big(f_n|_S-f\big)_0 \geq D$ for all $n,$ and $\{\|f_n|_S-f\|_{0,S}\}_{n \in \n} \to 0.$
\end{theorem}

We start with the following
\begin{lemma} \label{lem:funaprox}
Consider $f \in \mathcal{ F}_\mgot(S)$ such that $f$ never vanishes on $S-S^\circ(=\partial(M_S) \cup C_S).$

Then there exists    $\{f_n\}_{n \in \n} \subset \mathcal{ F}_\mgot(W)$ satisfying that $f_n|_S-f \in \Fcal_\hgot(S)$ and $(f_n)=(f)$ on $W$ for all $n,$ and $\{\|f_n|_S-f\|_{0,S}\}_{n \in \n} \to 0.$  In particular, $f_n$ is  holomorphic and never vanishing on $W-S$ for all $n.$
\end{lemma}

\begin{proof} Let $\mu$ and $b$ denote the genus of $W$ and the number of topological ends of  $W-\mbox{supp}(({f})).$
It is well known (see \cite{farkas}) that there exist $2 \mu+b-1$
cohomologically independent 1-forms  in $\Omega_\mgot(W) \cap
\Omega_\hgot(W-\mbox{supp}((f)))$  generating the first holomorphic De
Rham cohomology group  $\mathcal{H}^1_{\text{hol}}(W-\mbox{supp}((f))).$ Furthermore, the 1-forms can be chosen having at most single poles at points of $\mbox{supp}((f)).$ Thus, the map
$\mathcal{H}^1_{\text{hol}}(W-\mbox{supp}((f))) \to \c^{2
\mu+b-1}$,  $ \tau \mapsto \left( \int_{c} \tau \right)_{c \in
B_0},$ where $B_0$ is any homology basis of $W-\mbox{supp}((f)),$
is a linear isomorphism. By hypothesis, $\mbox{supp}((f))\subset S^\circ$ and $df/f \in \Omega_\mgot(S).$ Thus, there exists $\tau\in \Omega_\mgot(W) \cap
\Omega_\hgot\big(W-\mbox{supp}((f))\big)$ with  single poles at points of $\mbox{supp}((f))$  such that $\frac{1}{2 \pi i}
\int_\gamma \tau \in \z$ for all $\gamma \in \mathcal{ H}_1
\big(W-\mbox{supp}((f)),\z\big)$ and $df/f-\tau \in \Omega_\hgot
(S)$ is exact. 

Set $f_0=f e^{-\int \tau}.$ Since $\log (f_0)\in \mathcal{ F}_\hgot(S)$ then $f_0 \in \mathcal{ F}_\hgot(S)$ and it  
 never vanishes on $S.$  By Theorem
\ref{th:runge}, there exists $\{h_n\}_{n \in \n}
\subset \mathcal{ F}_\hgot(W)$ such that  $\{\|h_n|_S-\log (f_0)\|_{0,S}\}_{n \in \n} \to 0.$ It suffices to take $f_n=e^{h_n+\int \tau}$ for all $n.$
\end{proof}

\begin{lemma} \label{lem:formaprox}
Consider $\theta \in \Omega_\mgot(S)$  never vanishing on $S-S^\circ.$

Then  there exists $\{\theta_n\}_{n \in \n} \in \Omega_\mgot(W)$ satisfying that $\theta_n-\theta\in \Omega_\hgot(S)$ and $(\theta_n)=(\theta)$ on $W,$ and $\{\|\theta_n|_S-\theta\|_{0,S}\}_{n \in \n} \to 0.$   In particular, $\theta_n$ is  holomorphic and never vanishing on $W-S$ for all $n.$
\end{lemma}

\begin{proof} First of all, notice that there exists $\tau\in \Omega_\hgot(W)$ with finitely many zeroes. Indeed, since $W$ has finite topology and up to elementary surgery operations, we  can be view $W$ as an open domain in a non-simply connected compact Riemann surface $\hat{W},$ $\partial(\hat{W})=\emptyset.$ It suffices to take a non-identically zero holomorphic 1-form $\hat{\tau}$ on $\hat{W}$ and set $\tau=\hat{\tau}|_W.$

Label $f=\theta/\tau \in \mathcal{ F}_\mgot(S).$ By Lemma \ref{lem:funaprox}, there exists
$\{f_n\}_{n \in \n}$ in $\mathcal{ F}_\mgot(W)$ such that  $\{\|f_n|_S-f\|_{0,S}\}_{n \in \n} \to 0$ and  $(f_n)=(f)$
on $W$ for all $n.$ It suffices to set $\theta_n:=f_n \tau$ for all $n\in \n.$
\end{proof}

The following lemma is the kernel of this section. It is the key for proving Theorem \ref{co:immaprox}.  

\begin{lemma}[The Approximation Lemma] \label{lem:weiaprox}
Let $\Phi=(\phi_j)_{j=1,2,3}$ be a smooth triple in $\Omega_\hgot(S)^3$ such that $\sum_{j=1}^3 \phi_j^2=0$ and $\sum_{j=1}^3 |\phi_j|^2$ never vanishes on $S.$  Then  $\Phi$ can be approximated in the $\Ccal^0$ topology on $S$ by a sequence $\{\Phi_n=(\phi_{j,n})_{j=1,2,3}\}_{n \in \n}\subset \Omega_\hgot(W)^3$ satisfying that:
\begin{enumerate}[(i)]
  \item $\sum_{j=1}^3 \phi_{j,n}^2=0$ and $\sum_{j=1}^3 |\phi_{j,n}|^2$ never vanishes on $W,$
  \item  $\Phi_{n}-\Phi$ is exact on $S$ for all $n.$
\end{enumerate}
\end{lemma}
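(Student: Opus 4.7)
The plan is to use the Weierstrass representation. Given the null condition $\sum_{j=1}^3\phi_j^2=0$ and the nonvanishing of $\sum_{j=1}^3|\phi_j|^2$, set $\eta = \phi_1 - i\phi_2 \in \Omega_0^*(S)$ and $g = \phi_3/\eta \in \mathcal F^*(S)$, viewed as an $\overline{\c}$-valued meromorphic function. Then
\[
\phi_1 = \tfrac{1}{2}(1-g^2)\,\eta,\qquad \phi_2 = \tfrac{i}{2}(1+g^2)\,\eta,\qquad \phi_3 = g\,\eta,
\]
and $\sum_j|\phi_j|^2 = \tfrac{1}{2}(1+|g|^2)^2|\eta|^2$ is nowhere zero precisely when the divisor equality $(\eta)_0 = (g)_\infty^2$ holds. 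By applying a preliminary rigid rotation of $\r^3$, which acts on $g$ as a M\"obius transformation induced by a rotation of the Riemann sphere, I may arrange that $g$ takes neither the value $0$ nor $\infty$ on $S-S^\circ$, so that also $\eta$ is nonvanishing there. This is possible because $g(S-S^\circ) \subset \overline{\c}$ is a finite union of smooth arcs, hence nowhere dense, so an antipodal pair outside it can be rotated onto $\{0,\infty\}$.

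Next, I apply Lemma \ref{lem:formaprox} to $\eta$ to produce $\eta_n \in \Omega_0(W)$ with $(\eta_n)=(\eta)$ on $W$ and $\eta_n \to \eta$ in the $\omega$-topology on $S$, and Lemma \ref{lem:funaprox} to $g$ to produce $g_n \in \mathcal{F}(W)$ with $(g_n) = (g)$ on $W$ and $g_n \to g$ in the $\omega$-topology on $S$. Setting
\[
\tilde{\Phi}_n = \bigl(\tfrac{1}{2}(1-g_n^2)\eta_n,\; \tfrac{i}{2}(1+g_n^2)\eta_n,\; g_n\eta_n\bigr)\in\Omega_0(W)^3,
\]
the null condition $\sum_j \tilde{\phi}_{j,n}^2=0$ is automatic, and the preserved divisor matching $(\eta_n)_0=(\eta)_0=(g)_\infty^2=(g_n)_\infty^2$ guarantees that $\sum_j|\tilde\phi_{j,n}|^2=\tfrac{1}{2}(1+|g_n|^2)^2|\eta_n|^2$ never vanishes on $W$. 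Since $\tilde\Phi_n \to \Phi$ uniformly on $S$, condition $(i)$ is established.

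The condition $(ii)$, exactness of $\Phi_n - \Phi$ on $S$, is the main obstacle. The periods $\int_{\gamma_k}(\tilde\Phi_n - \Phi)$ along a homology basis $\{\gamma_k\}$ of $\mathcal{H}_1(S,\z)$ are only $O(\|\tilde\Phi_n - \Phi\|_S)$, not zero. To enforce exactness I perturb within a finite-dimensional, divisor-preserving family: replace $(g_n,\eta_n)$ by $(g_n e^{G_t},\,\eta_n e^{H_s})$ for $G_t=\sum_\ell t_\ell h_\ell$ and $H_s=\sum_\ell s_\ell h_\ell'$ ranging over finite-dimensional spaces of holomorphic functions on $W$; since $e^{G_t}$ and $e^{H_s}$ are holomorphic and nowhere vanishing, condition $(i)$ persists for small $(s,t)$. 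The resulting period map $(s,t)\mapsto\bigl(\int_{\gamma_k}\phi_{j,n}\bigr)_{j,k}$ is $\mathcal C^1$, and selecting the $h_\ell,h_\ell'$ according to the De Rham cohomology basis of $W-\mbox{supp}((g))$ used in the proof of Lemma \ref{lem:funaprox} makes its linearisation at the origin surjective onto the finite-dimensional period target. The implicit function theorem then provides, for $n$ sufficiently large, small $(s_n,t_n)\to 0$ solving the period equations exactly; the associated $\Phi_n$ satisfies both $(i)$ and $(ii)$. Verifying the non-degeneracy of this linearised period map---a cohomological computation using the auxiliary basis---is the crux of the final step.
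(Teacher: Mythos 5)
Your overall strategy---split the null triple into Weierstrass data $(g,\eta)$, approximate each factor by Lemmas \ref{lem:funaprox} and \ref{lem:formaprox} while preserving divisors so the metric $\sum_j|\phi_{j,n}|^2$ stays positive, then correct the periods via a finite-dimensional family $(g_n e^{G_t},\eta_n e^{H_s})$ and the implicit function theorem---is essentially the paper's approach, up to the cosmetic reparametrisation of using $\eta=\phi_1-i\phi_2$ in place of $\phi_3$.

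However, there is a genuine gap that you yourself flag but do not close: the surjectivity of the linearised period map onto $\c^{3\nu}$. This is precisely the content of Claim \ref{ass:regular}, and it is the technical heart of the lemma. It is not a routine computation: the paper proves it by contradiction, assuming all periods of the linearised variation lie in a proper subspace, reducing (after formally substituting $h=df/\phi_3$ and integrating by parts) to statements of the form $\int_{\Gamma_1} f\,dg/g^2=\int_{\Gamma_2} f\,dg=\int_{\Gamma_3} h\,\phi_3=0$, and then invoking the holomorphic De Rham cohomology of $W$ together with Runge's Theorem \ref{th:runge} to produce test functions $h=(\tau+dF)g^2/dg$ etc.\ that contradict these identities. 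Without that argument your proof is a sketch, not a proof.

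Moreover, your preliminary reduction is insufficient to make that surjectivity argument go through. A rigid rotation of $\r^3$ lets you move $g(S-S^\circ)$ off $\{0,\infty\}$, but it is a M\"obius change of $g$ and hence leaves the branch locus $(dg)_0$ invariant; it also cannot cure $g|_{M_S}$ being constant. Both defects are fatal for the surjectivity argument, which needs to divide by $dg$ and to impose divisor inequalities via Theorem \ref{th:runge} at points of $S^\circ$ only, so $dg$ must not vanish on $\partial(M_S)\cup C_S$ and $g$ must be non-constant on $M_S$. The paper handles these by two separate reductions (Claims \ref{ass:gauss} and \ref{ass:motion}): the first deforms the Gauss map within a conical analytic variety of period-preserving perturbations to make it non-constant, and the second shrinks $M_S$ through a nested sequence of tubular neighbourhoods on whose boundaries $g$, $1/g$, $dg$ are nonvanishing. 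Your rotation handles neither, so these steps would have to be supplied before the implicit-function-theorem finish can be justified.
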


\begin{proof} Label $g=\frac{\phi_3}{\phi_1-i \phi_2},$  $\eta_1=\frac{1}{g} \phi_3=\phi_1-i \phi_2$ and $\eta_2=g \phi_3=-\phi_1-i \phi_2,$ and  notice that $\eta_1,$ $\eta_2 \in \Omega_\hgot(S).$ 

Let $\mathcal{ B}_S$ be a homology basis of $\mathcal{ H}_1(S,\z),$ and label $\nu\in \n$ as the number of elements in $\mathcal{B}_S.$

 The following two claims reduce the proof to a more comfortable setting. 

\begin{assertion} \label{ass:gauss}
Without loss of generality, we can assume that $g|_{M_S}$ is not constant.
\end{assertion}
\begin{proof}
Suppose for a moment that $g|_{M_S}$ is constant, and up to
replacing $\Phi$ by $\Phi \cdot A$ for a suitable orthogonal
matrix $A \in \mathcal{ O}(3,\r),$  assume that $g \neq \infty.$
For each $h \in \mathcal{F}_\hgot(W),$ set $\eta_2(h)=(g+h)^2 \eta_1$
and $\phi_3(h)=\eta_1 (g+h).$  Consider the holomorphic map
$\mathcal{T}:\mathcal{F}_\hgot(W) \to \c^{2 \nu},$
$\mathcal{T}(h)=(\int_c(\eta_2(h)-\eta_2,\phi_3(h)-\phi_3))_{c \in
\mathcal{B}}.$ Note that $\mathcal{T}^{-1}(0)$ is conical, that is to say,  if
$\mathcal{T}(h)=0$ then $\mathcal{T}(\lambda h)=0$ for all
$\lambda \in \c.$ Furthermore, since $\mathcal{F}_\hgot(W)$ has infinite dimension  we can choose a non-constant $h \in \mathcal{T}^{-1}(0).$ Take  $\{\lambda_n\}_{n \in \n}\subset \c$   converging to zero, set $h_n:=\lambda_n h\in \mathcal{T}^{-1}(0)$ for all $n,$ and notice that $\{h_n\}_{n \in \n}\to 0$ in the $\Ccal^0$ topology on $S.$

Set
$\Psi_n\equiv (\psi_{1,n},\psi_{2,n},\psi_{3,n}):= (\frac{1}{2}
(\eta_1-\eta_2(h_n)),\frac{i}{2}(\eta_1+\eta_2(h_n)),\phi_3(h_n))
\in \Omega_\hgot(S)^3,$ and observe that
$\sum_{j=1}^3 \psi_{j,n}^2=0,$ $\sum_{j=1}^3 |\psi_{j,n}|^2$ never
vanishes on $S$ and  $g_n=\frac{\psi_{3,n}}{\psi_{1,n}-i \psi_{2,n}}$
is holomorphic and non-constant on $M_S,$ $n$ large enough (without loss of generality, for all $n$). Since $\mathcal{T}(h_n)=0,$ it is clear that $\Psi_n-\Phi$ is exact on $S,$  $n
\in \n.$ If the lemma holds for $\Psi_n$ for all $n,$ we can
construct a  sequence $\{\hat{\Psi}_{n,m}\}_{m \in \n} \subset
\Omega_\hgot(S)^3$ converging to $\Psi_n$ in the $\Ccal^0$ topology
on $S$ and satisfying that $\hat{\Psi}_{n,m}-\Psi_n$ is exact on
$S$ for all $n.$ A standard diagonal argument proves the claim.
\end{proof}

\begin{assertion} \label{ass:motion}
Without loss of generality,  we can assume that  $g,$ $1/g$ and $d g$  never vanish on $\partial(M_S) \cup C_S$ (hence the same holds for $\eta_i,$  $i=1,2,$ and $\phi_j,$ $j=1,2,3$). In particular, $g  \in \mathcal{ F}_\mgot(S)$ and  $d g \in \Omega_\mgot(S).$
\end{assertion}
\begin{proof} Take a sequence $M_{1} \supset M_{2} \supset \ldots $ of compact regions in $W$ such that $M_n^\circ$ is a tubular neighborhood of $M_S$ in $W$ for all $n,$  $M_{n} \subset M_{n-1}^\circ$ for any $n,$ $\cap_{n \in  \n} M_{n}=M_S,$ $\Phi$ holomorphically extends  (with the same name) to $M_{1},$  $\sum_{j=1}^3 |\phi_j|^2\neq 0$ on $M_1,$ and  $g,$ $1/g,$  and $d g$  never vanish on $\partial({M}_{n})$ for all $n$  (take into account Claim \ref{ass:gauss}). Choose $M_n$ in such a way that $S_n:=M_n \cup C_S\subset W$ is  an admissible subset and $\gamma-M_n^\circ$ is a (non-empty) Jordan arc for any component $\gamma$ of $C_S.$ In particular, 
$C_{S_n}= C_S-M_n^\circ,$ $n \in \n.$

Let  $(h_n,\psi_{3,n})\in \mathcal{ F}_\mgot(S_n) \times \Omega_\hgot(S_n)$ be any smooth data  such that
\begin{itemize}
  \item  $(h_n,\psi_{3,n})|_{M_{S_n}}=(g,\phi_3)|_{M_{S_n}}$ and $\sum_{j=1}^3 |\psi_{j,n}|^2$ never vanishes on $S_n,$ where $\Psi_n=(\psi_{j,n})_{j=1,2,3}=\big(\frac{1}{2}(1/h_n-h_n),\frac{i}{2}(1/h_n+h_n),1\big) \psi_{3,n} \in \Omega_\hgot(S_n)^3,$ $n \in \n,$
  \item  $h_n,$ $1/h_n$ and $d h_n$  never vanish on $\partial(M_{S_n}) \cup C_{S_n},$
  \item $\Psi_n|_S-\Phi$ is exact on $S,$ and
  \item the sequence $\{\Psi_n|_S\}_{n \in \n} \subset \Omega_\hgot(S)^3$ converges to $\Phi$ in the $\Ccal^0$ topology on $S.$
\end{itemize}
The existence of such data follows from classical approximation results by smooth functions.

Label $\mathcal{ T}\subset \Omega_\hgot(W)^3$ as the subspace of data $\Psi$ formally satisfying $(i)$ and $(ii)$ in the statement of the lemma. If the lemma held for any of the data in $\{\Psi_n\,|\; n \in \n\},$   $\Psi_n$ would lie in the closure of $\mathcal{ T}$ in $\Omega_\hgot(S_n)^3$ with respect to the $\Ccal^0$ topology on $S_n$ for all  $n \in \n.$ By a standard diagonal argument again, the same would occur for  $\Phi$ and we are done.
\end{proof}

Consider the period map
$$\mathcal{ P}:\mathcal{ F}_\hgot(W)\times \mathcal{ F}_\hgot(W) \to \c^{3 \nu},\quad
\mathcal{ P}((h_1,h_2))=\big( \int_{c} ((e^{h_2-h_1}-1)\eta_1, (e^{h_2+h_1}-1) \eta_2, (e^{h_2}-1) \phi_3) \big)_{c \in \mathcal{ B}_S}.$$
The meromorphic data inside the integrals are the difference between the Weierstrass data on $S$ associated to $( e^{h_1} g,e^{h_2}\phi_3)$ and the ones associated to $(g,\phi_3).$ The Weierstrass data determined by  $( e^{h_1} g,e^{h_2}\phi_3)$ satisfy (i), and if in addition $\mathcal{ P}((h_1,h_2))=0$ then also (ii).

The first key step in the proof of the lemma is to show that the Implicit Function Theorem can be applied to $\mathcal{P}$ at $(0,0).$ To do this, endow $\mathcal{ F}_\hgot(S)$ with the  maximum norm,  and observe that $\mathcal{P}$ is Fr\'{e}chet differentiable. It suffices to check that the Fr\'{e}chet derivative $\mathcal{ A}_0$ of $\mathcal{P}$ at $(0,0)$ has maximal rank.

\begin{assertion} \label{ass:regular}
$\mathcal{ A}_0:\mathcal{ F}_\hgot(W)\times \mathcal{ F}_\hgot(W)\to \c^{3 \nu}$ is surjective.
\end{assertion}

\begin{proof} Reason by contradiction and assume that $\mathcal{ A}_0(\mathcal{ F}_\hgot(W)\times \mathcal{ F}_\hgot(W))$ lies in a complex subspace
$\mathcal{ U}=\{\big((x_c,y_c,z_c)\big)_{c \in \mathcal{ B}_S} \in \c^{3 \nu}\,|\; \sum_{c \in \mathcal{ B}_S} \big(A_c x_c+B_c y_c +D_c z_c\big)=0\},$ where
$A_c,$ $B_c$ and $D_c \in \c$ for all $c\in \mathcal{ B}_S$  and $\sum_{c \in \mathcal{ B}_S} \big(|A_c|+|B_c|+|D_c|\big)\neq 0.$ This simply means that:

\begin{equation} \label{eq:fun}
-  \int_{\Gamma_1}  h \eta_1 +\int_{\Gamma_2} h \eta_2 =\int_{\Gamma_1}  h \eta_1 +\int_{\Gamma_2} h \eta_2+\int_{\Gamma_3} h \phi_3=0
\end{equation} for all $h \in \mathcal{ F}_\hgot(W),$ where $\Gamma_1= \sum_{c \in \mathcal{ B}_S} A_c c,$ $\Gamma_2= \sum_{c \in \mathcal{ B}_S} B_c c$ and
$\Gamma_3= \sum_{c \in \mathcal{ B}_S} D_c c.$

Label $\Sigma_0=\{f \in \mathcal{ F}_\hgot(W) \,|\; (f) \geq (\phi_3)^2\}.$ By Theorem \ref{th:runge}, the function $h=df/\phi_3 \in \mathcal{ F}_\hgot(S)$ lies in the closure of $\mathcal{ F}_\hgot(W)$ in the $\Ccal^0$ topology on $\mathcal{ F}_\hgot(S)$ for any $f \in \Sigma_0.$ Therefore, equation (\ref{eq:fun}) can be applied formally to $h=df/\phi_3,$ getting that $\int_{\Gamma_1} \frac{1}{g} df=\int_{\Gamma_2} g df=0$ for all $f \in \Sigma_0.$ Integrating by parts,
\begin{equation} \label{eq:fun1}
\int_{\Gamma_1} f \frac{d g}{g^2} =\int_{\Gamma_2} f\,dg =0
\end{equation} for all $f \in \Sigma_0.$

Let us show that $\Gamma_1=0.$

Let $\mu$ and $b$ denote the genus of $W$ and the number of ends of  $W.$
It is well known (see \cite{farkas}) that there exist $2 \mu+b-1$ cohomologically independent 1-forms  in $\Omega_\hgot(W)$  generating the first holomorphic De Rham cohomology group  $\mathcal{ H}^1_{\text{hol}}(W)$  of $W.$ Thus, the map $\mathcal{H}^1_{\text{hol}}(W) \longrightarrow \c^{2 \mu+b-1}$,  $ \tau \mapsto \left( \int_{c} \tau \right)_{c \in B_0},$ where $B_0$ is any homology basis of $W,$  is a linear isomorphism. Assume that $\Gamma_1\neq 0$ and take $[\tau] \in H^1_{\text{hol}}(W)$ such that $\int_{\Gamma_1} \tau \neq 0.$ Since $W$ is an open surface,   $ {\mathcal{ F}_\hgot}(W)$ has infinite dimension and we can find $F \in  {\mathcal{ F}_\hgot}(W)$ such that
$(\tau+dF)_0 \geq  (dg)_0 (g)_\infty^2 (\phi_3)^2.$ Set $h:=\frac{(\tau+dF)g^2}{d g}$ and note that $(h) \geq (\phi_3)^2.$ By Theorem \ref{th:runge},  $h$ lies in the closure of $\Sigma_0$ in $\mathcal{ F}_\hgot(S)$ with respect to the $\Ccal^0$ topology, hence equation (\ref{eq:fun1}) can be formally applied to $h,$  giving that $ \int_{\Gamma_1} \tau+dF=\int_{\Gamma_1} \tau=0,$ a contradiction.

By a similar argument $\Gamma_2=0$ and equation (\ref{eq:fun}) becomes:
\begin{equation} \label{eq:fun2}
\int_{\Gamma_3} h \phi_3=0
\end{equation} for all $h \in \mathcal{ F}_\hgot(W).$

Since $\sum_{c \in \mathcal{ B}_S} \big(|A_c|+|B_c|+|D_c|\big) \neq 0,$ then $\Gamma_3 \neq 0.$
Reason as above and choose   $[\tau] \in H^1_{\text{hol}}(W)$ and $F \in  {\mathcal{ F}_\hgot}(W)$ such that $\int_{\Gamma_3} \tau \neq 0$  and
$(\tau+dF)_0 \geq  (\phi_3).$ Set $h:=\frac{\tau+dF}{\phi_3}$ and note that $h\in \mathcal{ F}_\hgot(S).$ By Theorem \ref{th:runge},  $h$ lies in the closure of $\mathcal{ F}_\hgot(W)$ in $\mathcal{ F}_\hgot(S)$ with respect to the $\Ccal^0$ topology,  and equation (\ref{eq:fun2}) gives that $ \int_{\Gamma_3} \tau+dF=\int_{\Gamma_3} \tau=0,$ a contradiction. This proves the claim.
\end{proof}

Let $\{e_1,\ldots,e_{3 \nu}\}$ be a basis of $\c^{3 \nu},$  fix
$H_i=(h_{1,i},h_{2,i}) \in \mathcal{ A}_0^{-1}(e_i)$ for all $i,$ and set $\mathcal{
Q}_0:\c^{3 \nu} \to \c^{3 \nu}$ as the analytical map given by
$$\mathcal{ Q}_0((z_i)_{i=1,\ldots,3 \nu})=\mathcal{ P}(\sum_{i=1,\ldots,3
\nu} z_i H_i).$$ By Claim \ref{ass:regular} $d (\mathcal{ Q}_0)_0$ is
an isomorphism, so there exists a closed Euclidean ball $U\subset
\c^{3 \nu}$ centered at the origin such that $\mathcal{ Q}_0:U \to
\mathcal{ Q}_0(U)$ is an analytical diffeomorphism. Furthermore,
notice that $0=\mathcal{ Q}_0(0) \in \mathcal{ Q}_0(U)$ is an interior
point of  $\mathcal{ Q}_0(U).$

On the other hand, by Lemmas \ref{lem:funaprox} and
\ref{lem:formaprox} there exists a sequence $\{(f_n,\psi_n)\}_{n
\in \n} \subset \mathcal{ F}_\mgot(W)\times \Omega_\hgot(W)$ such that
$(f_n)=(g)$ and $(\psi_n)=(\phi_3)$ for all $n,$ and
$\{(f_n,\psi_n)|_S\}_{n \in \n} \to (g,\phi_3)$ in the
$\Ccal^0$ topology on $S.$

Label $\mathcal{ P}_n:\mathcal{ F}_\hgot(W)\times \mathcal{ F}_\hgot(W) \to \c^{3 \nu}$ as the Fr\'{e}chet differentiable map
$$\mathcal{ P}_n((h_1,h_2))=\big( \int_{c} (e^{h_2-h_1}\eta_{1,n}-\eta_1, e^{h_2+h_1} \eta_{2,n}-\eta_2, e^{h_2} \psi_n-\phi_3) \big)_{c \in \mathcal{ B}_S},$$ where $\eta_{1,n}=\frac{1}{2} \psi_n(1/f_n-f_n)$ and
$\eta_{2,n}=\frac{i}{2} \psi_n(1/f_n+f_n),$ and call $\mathcal{ Q}_n:\c^{3 \nu} \to \c^{3 \nu}$ as the analytical map $\mathcal{ Q}_n((z_i)_{i=1,\ldots,3 \nu})=\mathcal{ P}_n(\sum_{i=1,\ldots,3 \nu} z_i H_i)$  for all $n \in \n.$ Since $\{\mathcal{ Q}_n\}_{n \in \n} \to \mathcal{ Q}_0$  uniformly on compacts subsets of $\c^{3 \nu},$ without loss of generality we can suppose that $\mathcal{ Q}_n:U \to \mathcal{ Q}_n(U)$ is an analytical diffeomorphism  and  $0 \in \mathcal{ Q}_n(U)$ for all $n.$ Label ${\bf{y_n}}=(y_{1,n},\ldots,y_{3 \nu,n})$ as the unique point in $U$ such that $\mathcal{ Q}_n ({\bf{y_n}})=0$ and note that $\{{\bf{y_n}}\}_{n \in \n} \to 0.$ Setting
$$g_n=e^{\sum_{j=1}^{3 \nu} y_{j,n} h_{1,j}} f_n,\quad  \phi_{3,n}=e^{\sum_{j=1}^{3 \nu} y_{j,n} h_{2,j}} \psi_n$$ for all $n\in \n,$ the sequence $\{(g_n,\phi_{3,n})\}_{n \in\n }$  solves the lemma.
\end{proof}

The proof of the following corollary is just an elementary adjustment of the above one.

\begin{corollary} \label{co:weiaprox} In the previous lemma we can choose $\phi_{3,n}=\phi_3$ for all $n \in \n,$ provided that $\phi_3$  extends  holomorphically to  $W$ and $\phi_3$ never vanishes on $C_S.$
\end{corollary}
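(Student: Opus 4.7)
The plan is to repeat the proof of Lemma \ref{lem:weiaprox} while suppressing any deformation of the third component; equivalently, setting $h_2\equiv 0$ throughout. Apply Lemma \ref{lem:funaprox} to $g\in\mathcal{F}^*(S)$ to obtain $\{f_n\}_{n\in\n}\subset \mathcal{F}(W)$ with $(f_n)=(g)$ and $f_n\to g$ in the $\omega$-topology on $S$. For each $n$ introduce the Fr\'{e}chet differentiable map
\[
\mathcal{P}_n\colon\mathcal{F}_0^*(S)\to\c^{2\nu},\qquad \mathcal{P}_n(h)=\Bigl(\int_c(e^{-h}\phi_3/f_n-\eta_1),\ \int_c(e^{h}f_n\phi_3-\eta_2)\Bigr)_{c\in\mathcal{B}_S},
\]
where $\eta_1=\phi_3/g$, $\eta_2=g\phi_3$ and $\nu$ is the cardinal of a homology basis $\mathcal{B}_S$ of $\mathcal{H}_1(S,\z)$. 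A zero $h_n\in\mathcal{F}_0(W)$ of $\mathcal{P}_n$ with $h_n\to 0$ yields $g_n:=f_ne^{h_n}$, $(g_n)=(g)$, and the triple $\Phi_n:=\bigl(\tfrac{1}{2}(\phi_3/g_n-g_n\phi_3),\tfrac{i}{2}(\phi_3/g_n+g_n\phi_3),\phi_3\bigr)\in\Omega_0(W)^3$ that solves the lemma with $\phi_{3,n}=\phi_3$; the target space is $\c^{2\nu}$ because the period in the third slot vanishes trivially.

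The existence of $h_n$ reduces, via the analytic implicit function theorem employed in Lemma \ref{lem:weiaprox}, to the surjectivity of the restricted Fr\'{e}chet derivative
\[
\mathcal{A}_0\colon\mathcal{F}_0(W)\to\c^{2\nu},\qquad \mathcal{A}_0(h)=\Bigl(-\int_c h\,\eta_1,\ \int_c h\,\eta_2\Bigr)_{c\in\mathcal{B}_S}.
\]
Mimicking Claim \ref{ass:regular}, assume for contradiction the image lies in the hyperplane defined by $(A_c,B_c)\in\c^{2\nu}\setminus\{0\}$; setting $\Gamma_1=\sum A_cc$ and $\Gamma_2=\sum B_cc$, one has $-\int_{\Gamma_1}h\,\eta_1+\int_{\Gamma_2}h\,\eta_2=0$ for every $h\in\mathcal{F}_0(W)$. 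Since $\phi_3\in\Omega_0(W)$ and $\phi_3$ is non-vanishing on $C_S$, the substitution $h=df/\phi_3$ is admissible for every $f\in\Sigma_0:=\{f\in\mathcal{F}_0(W):(f)\geq(\phi_3)^2\}$, and integration by parts yields
\[
\int_{\Gamma_1}f\,\frac{dg}{g^2}+\int_{\Gamma_2}f\,dg=0\qquad\forall f\in\Sigma_0.
\]

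The main obstacle, and the only real novelty with respect to Claim \ref{ass:regular}, is to decouple this single identity into the two separate vanishing statements $\int_{\Gamma_1}f\,dg/g^2=0$ and $\int_{\Gamma_2}f\,dg=0$. Replace $\Gamma_1,\Gamma_2$ by homologous representatives that are disjoint (outside finitely many transverse intersection points, handled by imposing further vanishing on $f$) and avoid the zeros and poles of $g,\phi_3,dg$. Theorem \ref{th:runge} applied on the compact set $\Gamma_1\sqcup\Gamma_2$ provides a dense subspace of $\Sigma_0$ whose restrictions to $\Gamma_1$ and $\Gamma_2$ can be prescribed independently; taking $f$ essentially vanishing on $\Gamma_2$, respectively $\Gamma_1$, produces each of the two separate relations. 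Applying the contradiction argument of Claim \ref{ass:regular} verbatim to each forces $\Gamma_1=\Gamma_2=0$, contradicting $(A_c,B_c)\neq 0$.

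With $\mathcal{A}_0$ surjective, the proof finishes exactly as in Lemma \ref{lem:weiaprox}: pick $H_i\in\mathcal{A}_0^{-1}(e_i)\cap\mathcal{F}_0(W)$ for a basis $\{e_i\}$ of $\c^{2\nu}$, set $\mathcal{Q}_n(z)=\mathcal{P}_n(\sum z_iH_i)$, and observe that $\mathcal{Q}_n\to\mathcal{Q}_0$ uniformly on a closed ball $U\subset\c^{2\nu}$ on which $\mathcal{Q}_0$ is a local diffeomorphism with $0\in\mathcal{Q}_0(U)$. For $n$ large there is a unique $\mathbf{y}_n\in U$ with $\mathcal{Q}_n(\mathbf{y}_n)=0$, and $\mathbf{y}_n\to 0$; then $h_n:=\sum y_{i,n}H_i$ is the required correction.
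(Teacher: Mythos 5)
Your overall scheme, freezing $h_2\equiv 0$ and running the machinery of Lemma \ref{lem:weiaprox} with the single unknown $h$, does capture the right idea, but there is a concrete gap in the construction of the approximating triples.

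You take $g_n:=f_ne^{h_n}$ with $(g_n)=(g)$ on $W$, so $g_n$ is holomorphic and non-vanishing on $W-S$, and then declare $\Phi_n=\bigl(\tfrac{1}{2}(\phi_3/g_n-g_n\phi_3),\tfrac{i}{2}(\phi_3/g_n+g_n\phi_3),\phi_3\bigr)$ to be the answer. But the hypotheses only forbid $\phi_3$ from vanishing on $C_S$; they allow $\phi_3$ to have zeros in $W-S$. At such a zero $p$, since $g_n(p)\in\c\setminus\{0\}$, all three entries of $\Phi_n$ vanish, so $\sum_{j=1}^3|\phi_{j,n}|^2$ vanishes at $p$ and condition (i) of the Lemma fails. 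The paper's proof circumvents this with an extra ingredient you omit: using the Weierstrass/Riemann--Roch theorem it first produces a holomorphic $H\in\mathcal{F}_0(W)$ with $(H)=(\phi_3|_{W-S})$, and then approximates $g/H$, not $g$, by functions $f_n$ with $(f_n)=(g|_S)$. The resulting $g_n:=f_nH$ has zeros on $W-S$ exactly matching those of $\phi_3$, so $\phi_3/g_n$ is holomorphic and non-vanishing on $W-S$ and the Weierstrass triple remains non-degenerate on all of $W$. Without this step your $\Phi_n$ is not a legitimate triple.

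Two further points deserve attention. First, you skip the preliminary reduction (paralleling Claim \ref{ass:motion}, and present in the paper's proof of the Corollary) ensuring that $g$, $1/g$ and $dg$ never vanish on $\partial(M_S)\cup C_S$; this is used tacitly when you substitute $h=df/\phi_3$ and integrate by parts to obtain $dg/g^2$ and $dg$ along $C_S$. Second, your decoupling of the single identity $\int_{\Gamma_1}f\,dg/g^2+\int_{\Gamma_2}f\,dg=0$ by passing to disjoint homologous representatives and applying Theorem \ref{th:runge} on $\Gamma_1\sqcup\Gamma_2$ is more delicate than the sketch suggests: $\Gamma_1=\sum_cA_cc$ and $\Gamma_2=\sum_cB_cc$ may share every cycle $c$ with $A_cB_c\neq0$, the replacements must be performed in $S$ away from the poles of $\eta_1,\eta_2$ for the periods to remain unchanged, and admissibility in $W$ of the resulting compact set is not automatic. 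The idea is plausible (and the paper's own ``reasoning like in the proof of Lemma \ref{lem:weiaprox}'' is terse on exactly this point), but as written it does not amount to a proof that $\Gamma_1=\Gamma_2=0$.
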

\begin{proof} Without loss of generality, we can suppose that $g,$ $1/g$ and $dg$ never vanish on $\partial(M_S) \cup C_S.$ Indeed, consider a sequence  $\{M_{n}\}_{n \in \n}$ like in the proof of Claim \ref{ass:motion}. We have that $\cap_{n=1}^\infty M_n=M_S,$   $S_n:=M_n \cup C_S$ is admissible in $W,$  $g,$ $1/g$ and $dg$ never vanishes on $\partial(M_{n})$ and $\gamma-M_n^\circ$ is a (non-empty) Jordan arc for any component $\gamma$ of $C_S,$ for all $n\in \n.$ Let  $h_n\in \mathcal{ F}_\mgot(S_n)$ be a smooth datum  such that:
\begin{itemize}
  \item  $h_n|_{M_{S_n}}=g|_{M_{S_n}}$ and $\sum_{j=1}^3 |\psi_{j,n}|^2$ never vanishes on $S_n,$ where $\Psi_n=(\psi_{j,n})_{j=1,2,3}=\big(\frac{1}{2}(1/h_n-h_n),\frac{i}{2}(1/h_n+h_n),1\big) \phi_{3} \in \Omega_\hgot(S_n)^3,$ $n \in \n,$
  \item  $h_n,$ $1/h_n$ and $d h_n$  never vanish on $\partial(M_{S_n}) \cup C_{S_n},$
  \item $\Psi_n|_S-\Phi$ is exact on $S,$ and
  \item the sequence $\{\Psi_n|_S\}_{n \in \n} \subset \Omega_\hgot(S)^3$ converges to $\Phi$ in the $\Ccal^0$ topology on $S.$
\end{itemize}
Reasoning as in the proof of Claim \ref{ass:motion}, if the Lemma held for any of the data in $\{\Psi_n\,|\; n \in \n\}$ the same would occur for  $\Phi$ and we are done.

Reasoning like in the proof of Lemma \ref{lem:weiaprox}, we can prove that $\hat{\mathcal{ A}}_0:\mathcal{ F}_\hgot(W) \to \c^{ \nu}$ is surjective, where $\hat{\mathcal{ A}}_0$ is the Fr\'{e}chet derivative of
 $\hat{\mathcal{ P}}:\mathcal{ F}_\hgot(W) \to \c^{2 \nu},$  $\hat{\mathcal{ P}}(h):=\mathcal{ P}(h,0).$  Then take $\hat{H}_i \in \hat{\mathcal{ A}}_0^{-1}(e_i)$ for all $i$, where $\{e_1,\ldots, e_{2 \nu}\}$ is a basis of $\c^{2 \nu},$ and define $\hat{\mathcal{ Q}}_0:\c^{2 \nu} \to \c^{2 \nu}$ by $\hat{\mathcal{ Q}}_0((z_i)_{i=1,\ldots,2 \nu})=\hat{\mathcal{ P}}(\sum_{i=1,\ldots,2 \nu} z_i \hat{H}_i).$

 Use Riemann-Roch Theorem  to find a holomorphic function $H \in {\mathcal F}_\hgot(W)$ such that $(H)=(\phi_3|_{W-S}),$ and then Lemma \ref{lem:funaprox} to get  $\{f_n\}_{n \in \n} \subset \mathcal{ F}_\mgot(W)$ such that $(f_n)=(g|_{S})$ for all $n$ and $\{f_n|_S\}_{n \in \n} \to g/H$ in the $\Ccal^0$ topology on $S.$

  Set $\hat{\mathcal{ P}}_n:\mathcal{ F}_\hgot(W) \to \c^{2 \nu}$ by $\hat{\mathcal{ P}}_n(h)=\big( \int_{c} (e^{-h}\eta_{1,n}-\eta_1, e^{h} \eta_{2,n}-\eta_2) \big)_{c \in \mathcal{ B}_S},$ where $\eta_{1,n}=\frac{1}{2} \phi_3(\frac{1}{f_n H}-f_n H)$ and
$\eta_{2,n}=\frac{i}{2} \phi_3(\frac{1}{f_n H}+f_n H),$ and call $\hat{\mathcal{ Q}}_n:\c^{2 \nu} \to \c^{2 \nu}$ as the analytical map $\hat{\mathcal{ Q}}_n((z_i)_{i=1,\ldots,2 \nu})=\hat{\mathcal{ P}}_n(\sum_{i=1,\ldots,2 \nu} z_i \hat{H}_i)$  for all $n \in \n.$  To finish, reason as in the  proof of Lemma \ref{lem:weiaprox}.
\end{proof}

As a consequence of Lemma \ref{lem:weiaprox} and Corollary \ref{co:weiaprox}, one has the following approximation result of marked immersions by conformal minimal immersions. It will play a crucial role in the proof of the main results of this paper.

\begin{theorem}\label{co:immaprox}

Let $S\subset \mathcal{N}$ be  admissible and connected, and let $W\subset \mathcal{N}$ be a domain of finite topology containing $S$ such that $i_*:\mathcal{H}_1(S,\z)\to \mathcal{H}_1(W,\z)$ is an isomorphism, where $i:S\to W$ denotes the inclusion map. Let $X_\varpi\in \mathcal{M}_\ggot^*(S),$ and write $X=(X_j)_{j=1,2,3}$ and $\partial X_\varpi=(\hat{\phi}_j)_{j=1,2,3}.$

Then, for any $\xi>0$ there exists $Y\in\mathcal{M}(W)$ such that $p_Y=p_{X_\varpi}$ and $\|Y-X_\varpi\|_{1,S}<\xi.$

Furthermore, if $\hat{\phi}_3$ extends to a holomorphic 1-form on $W$ that never vanishes on $C_S,$ then $Y=(Y_j)_{j=1,2,3}$ can be chosen so that $Y_3|_S=X_3.$ 
\end{theorem}
\begin{proof}
Applying Lemma \ref{lem:weiaprox} and Corollary \ref{co:weiaprox} to the data $(\hat{\phi}_j)_{j=1,2,3},$ and then integrating with the suitable initial condition (take into account that $S$ is connected), we can find a sequence $\{F_n\}_{n\in\n}\subset\mathcal{M}(W)$ such that 
 $\{\|F_n-X_\varpi\|_{1,S}\}_{n \in \n}\to 0$ and
 the flux map $p_{F_n}=p_{X_\varpi}$ for all $n\in\n.$ Furthermore, if $\hat{\phi}_3$ extends to a holomorphic 1-form on $W$ that never vanishes on $C_S,$ then $F_n=(F_{n,j})_{j=1,2,3}$ can be chosen so that $F_{n,3}|_S=X_3$ for all $n\in\n.$ 

It suffices to set $Y:=F_n$ for a large enough $n.$ 
\end{proof}
The strength of this result is that only smoothness is assumed for $X$ on $C_S.$ This provides an enormous capability for modeling minimal surfaces in $\r^3.$ 

\section{Properness and Conformal Structure of Minimal
Surfaces}\label{sec:fun}

The Runge type lemma for minimal surfaces below concentrates most of the technical
computations required in the proof of the main theorem of this
section. Roughly speaking, this lemma asserts that a compact minimal surface whose boundary lies outside a wedge on $\r^3$ can be stretched  near the boundary in such a way that the  boundary of the new surface lies outside a parallel wedge. The strength of this lemma is that in this process the topology and conformal structure of the surface can be arbitrarily  enlarged. Moreover, the flux map of the arising surface can be prescribed. See Figure \ref{fig:lemma}.

From now on, we label $x_k:\r^3 \to \r$ as the $k$-th coordinate
function, $k=1,2,3.$ For each $\theta\in(0,\pi/2),$ $\delta \in \r,$  we call $$\Pi_\delta (\theta)=\{(x_1,x_2,x_3) \in \r^3 \,|\; x_3+\tan(\theta) x_1 > \delta\}.$$

Although Theorem I in the Introduction was stated for $\theta \in (0,\pi/2),$ in the following lemma and for technical reasons we will restrict ourselves to the case $\theta \in (0,\pi/4).$ 
\begin{lemma} \label{lem:fun}
Let $M,$ $V \subset \mathcal{ N}$ be two Runge compact regions with analytical boundary such that $M \subset V^\circ.$

Consider $X \in \mathcal{ M}(M)$ and let $p:\mathcal{ H}_1(V,\z)\to\r$ be any morphism extension of $p_X.$ Suppose there are  $\theta\in (0,\pi/4)$ and $\delta\in (0,+\infty)$ such that $X(\partial(M)) \subset \Pi_\delta(\theta) \cup \Pi_\delta(-\theta).$

Then, for any $\epsilon>0$  there exists $Y \in \mathcal{ M}(V)$ such that 
\begin{enumerate}[\rm \bf (1)]
\item $p_Y=p.$ 
\item $\|Y-X\|_{1,M}<\epsilon.$  
\item $Y(\partial(V)) \subset \Pi_{\delta+1}(\theta) \cup \Pi_{\delta+1}(-\theta).$ 
\item $Y(V-M) \subset \Pi_\delta(\theta) \cup \Pi_\delta(-\theta).$
\end{enumerate}
\end{lemma}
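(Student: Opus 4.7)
\medskip

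\textbf{Plan.} The idea is to reduce the lemma to the Approximation Lemma (Lemma \ref{lem:weiaprox}) via a labyrinth construction: enlarge $M$ to an admissible compact set $S = M \cup \Gamma \subset V$, where $\Gamma$ is a carefully chosen finite family of analytical Jordan arcs; extend $X$ to a marked immersion on $S$ whose image already satisfies the desired wedge inequalities and whose generalized flux coincides with $p$; then approximate by honest holomorphic data and verify.

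\textbf{Labyrinth and arc-wise extension of $X$.} I would first choose $\Gamma \subset V - M^\circ$ so that (a) $S := M \cup \Gamma$ is admissible in an open neighborhood $W$ of $V$ in $\mathcal{N}$; (b) the inclusion $S \hookrightarrow V$ induces a surjection $\mathcal{H}_1(S,\z) \twoheadrightarrow \mathcal{H}_1(V,\z)$; (c) a finite collection of endpoints of $\Gamma$ lies on $\partial(V)$; and (d) every connected component of $V - S$ has small conformal diameter. Along each arc $\alpha \subset \Gamma$ starting at $Q \in \partial(M)$ I would prolong $X$ by a regular curve $X|_\alpha$ in $\r^3$ issuing from $X(Q)$; since $X_3(Q) + \tan(\theta)|X_1(Q)| > \delta$, the curve can be arranged (e.g.\ by increasing $x_3$ monotonically) so that $x_3 + \tan(\theta)|x_1| > \delta$ along all of $X|_\alpha$, and $>\delta+1$ at the endpoint lying on $\partial(V)$. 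I would then equip $\Gamma$ with an orientable smooth normal field $\varpi$ so that $X_\varpi := (X,\varpi) \in \mathcal{M}^*(S)$ has generalized flux $p_{X_\varpi} = p|_{\mathcal{H}_1(S,\z)}$; the freedom in choosing $\varpi$ along a family of arcs realizing the new homology generators of $S$ over $M$ is enough to tune the imaginary periods of $\partial X_\varpi$ to match $p$.

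\textbf{Approximation and verification.} Apply Lemma \ref{lem:weiaprox} to $\Phi := \partial X_\varpi \in \Omega_0^*(S)^3$, obtaining a sequence $\Phi_n = (\phi_{j,n})_{j=1,2,3} \in \Omega_0(W)^3$ with $\sum_j \phi_{j,n}^2 = 0$, $\sum_j |\phi_{j,n}|^2$ non-vanishing on $W$, $\Phi_n - \Phi$ exact on $S$, and $\Phi_n \to \Phi$ in the $\omega$-topology on $S$. Since $X|_S$ is single-valued, $\mbox{Re}\,\Phi$ has no real periods on $S$; exactness of $\Phi_n - \Phi$ thus forces $\mbox{Re}\,\Phi_n$ to be period-free on $S$, hence on $V$ by (b). Setting
\begin{equation*}
Y_n(P) := X(P_0) + \mbox{Re} \int_{P_0}^P \Phi_n,
\end{equation*}
$Y_n$ is a well-defined element of $\mathcal{M}(V)$, and matching of imaginary periods gives $p_{Y_n} = p$. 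For $n$ large, uniform convergence on $S$ produces $\|Y_n - X\| < \epsilon$ on $M$ and preserves, with strict margin, the wedge inequalities on $\Gamma$ and on the $\partial(V)$-endpoints of $\Gamma$. On each component $V_j$ of $V - S$, the uniform bounds on $\Phi_n$ near $S$ together with property (d) yield an oscillation of $Y_n$ on $V_j$ small compared to the margin, so the wedge condition transfers from $\partial(V_j) \subset \Gamma \cup \partial(M) \cup \partial(V)$ to the interior of $V_j$, giving the required inclusions $Y_n(\partial(V)) \subset \Pi^*_{\delta+1}(\theta) \cup \Pi^*_{\delta+1}(-\theta)$ and $Y_n(V-M) \subset \Pi^*_{\delta}(\theta) \cup \Pi^*_{\delta}(-\theta)$.

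\textbf{Main obstacle.} The crux is the first step: simultaneously fulfilling the competing demands on $\Gamma$ (topology, namely surjectivity on $\mathcal{H}_1$; geometry, namely small components of $V - S$; and admissibility), on the arc-wise extension (strict wedge inequality preserved along each $X|_\alpha$, using crucially that $\tan(\theta) < 1$), and on the conormal field $\varpi$ (realizing the prescribed flux $p$ on the new cycles). Once this setup is achieved, Lemma \ref{lem:weiaprox} handles the analytic part and the rest is a quantitative oscillation estimate for minimal immersions on conformally small regions.
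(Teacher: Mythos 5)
Your plan correctly identifies the tools (admissible sets, marked immersions, the Approximation Lemma) and the shape of the construction (extend along arcs, match flux, approximate). But the verification step has a genuine gap that the paper's proof is specifically engineered to avoid.

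The problematic sentence is: \emph{``the uniform bounds on $\Phi_n$ near $S$ together with property~(d) yield an oscillation of $Y_n$ on $V_j$ small compared to the margin.''} Lemma~\ref{lem:weiaprox} gives uniform convergence $\Phi_n\to\Phi$ \emph{only on $S$}. Off $S$ the holomorphic data $\Phi_n$ are completely uncontrolled. A maximum-principle argument cannot save this: since $S$ is admissible, $W-S$ has no bounded components, so each component $V_j$ of $V-S$ necessarily meets $\partial(V)$, and on $\partial(V_j)\cap\partial(V)$ you have no bound on $\Phi_n$. Small conformal diameter of $V_j$ is therefore irrelevant, because the coefficient of $\Phi_n$ there can be huge. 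In fact the whole point of Runge-type constructions of complete/proper minimal surfaces is that the approximants \emph{must} be allowed to blow up off $S$; the art is to make them blow up in a harmless direction.

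The paper's proof of Lemma~\ref{lem:fun} circumvents this by a mechanism your plan does not contain. After an induction that reduces to the case $\chi(V-M^\circ)=0$ (so $V-M^\circ$ is a union of annuli), each annulus is cut into ``positive'' and ``negative'' sectors $\Omega_j^i$ according to which side of the wedge $X(\alpha_j^i)$ falls into. Then one applies a rotation $L^+$ that turns the relevant half of the wedge into the half-space $\Pi_\delta(0)=\{x_3\le\delta\}$, and performs a Jorge--Xavier type deformation in which \emph{$\phi_3$ is never modified} (that is why Corollary~\ref{co:weiaprox}, not Lemma~\ref{lem:weiaprox}, is invoked). Because $x_3\circ Z_n^+=x_3\circ Y^+$ identically on $V$, the half-space inclusion $Z_n^+(\cdot)\subset\Pi^*_\delta(0)$ is inherited on the regions $\overline{\Omega_j^i-K_j^i}$ \emph{even though no approximation takes place there}. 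This is the replacement for the oscillation estimate you invoke, and it is essential. A second rotation $L^-\circ L^-$ and a symmetric deformation then handles the negative sectors, and a final rotation $L^+$ returns to the original wedge. Your single-pass scheme, which deforms all three coordinates and relies on off-$S$ smallness, would not produce the required inclusions $Y(V-M)\subset\Pi^*_\delta(\theta)\cup\Pi^*_\delta(-\theta)$ and $Y(\partial(V))\subset\Pi^*_{\delta+1}(\theta)\cup\Pi^*_{\delta+1}(-\theta)$.

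Two smaller omissions worth flagging: your plan does not explain how to reduce the general case to one amenable to the annulus decomposition (the paper does this by induction on $-\chi(V-M^\circ)$, adding one Jordan arc at a time), and it does not explain why the wedge being open at an angle strictly greater than $\pi$ (i.e.\ $\theta\in(0,\pi/4)$) is used — in the paper this enters through the choice of the translation magnitudes $\lambda^\pm$ in equations~\eqref{eq:lambda+} and~\eqref{eq:lambda-}, which exist precisely because the rotations $L^\pm$ by $\pm\theta$ still leave room inside the wedge when $\theta<\pi/4$.
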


\begin{figure}[ht]
    \begin{center}
    \scalebox{0.5}{\includegraphics{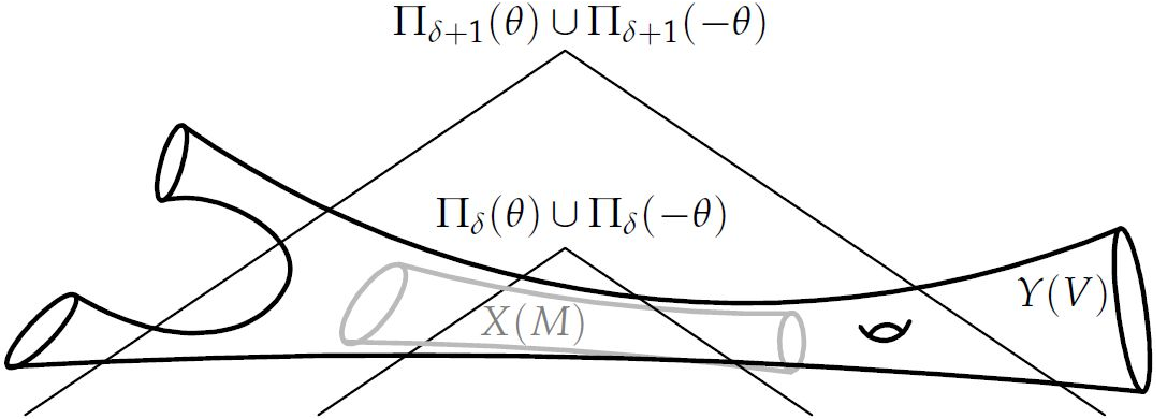}}
        \end{center}
        \vspace{-0.5cm}
\caption{Lemma \ref{lem:fun}}\label{fig:lemma}
\end{figure}

Before proving Lemma \ref{lem:fun} let us show the following particular instance:

\begin{lemma}\label{lem:car0}
Lemma \ref{lem:fun} holds when the Euler characteristic $\chi(V-M^\circ)$ vanishes.
\end{lemma}

\subsection{Proof of Lemma \ref{lem:car0}}

Since $M \subset V^\circ$ and $V^\circ-M$ has no bounded components in $V^\circ,$ then
 $V-M^\circ=\cup_{j=1}^k A_j,$ where $A_1,\ldots, A_k$ are pairwise disjoint compact annuli. Write $\partial(A_j)=\alpha_j \cup \beta_j,$ where $\alpha_j\subset \partial(M)$ and $\beta_j\subset \partial(V)$  for all $j.$
 
First of all, let us introduce some subsets of $V-M^\circ.$ See Figure \ref{fig:anillo}.

Since $X(\partial(M)) \subset \Pi_\delta(\theta) \cup \Pi_\delta(-\theta),$
each $\alpha_j$ can be divided into finitely many compact Jordan arcs $\alpha_j^i,$ $i=1,\ldots, n_j\geq 2,$ laid end to end 
and satisfying that either $X(\alpha_j^i) \subset \Pi_\delta(\theta)$ or  $X(\alpha_j^i) \subset \Pi_\delta(-\theta)$ for all $i.$ Up to refining the partitions, we can assume that $n_j=m \in \n$ for all $j.$  Set $\Ical=\{1,\ldots,m\}\times\{1,\ldots,k\}.$

An arc $\alpha_j^i$ is said to be {\em positive} if $X(\alpha_j^i) \subset \Pi_\delta(\theta),$ and  {\em negative} otherwise. Notice that
$X(\alpha_j^i) \subset \Pi_\delta(-\theta)$ for any negative (and possibly for some positive)
$\alpha_j^i.$ We also label $Q_j^i$ and $Q_j^{i+1}$ as the
endpoints of $\alpha_j^i,$ in such a way that
$Q_j^{i+1}=\alpha_j^i \cap \alpha_j^{i+1},$ $i=1,\ldots,m,$  (obviously, $Q_j^{m+1}=Q_j^1$).

Let $\{r_j^i\,|\;i=1,\ldots,m\}$ be a collection of pairwise
disjoint analytical compact Jordan arcs in $A_j$ such that $r_j^i$ has
initial point $Q_j^i \in \alpha_j,$ final point $P_j^i\in \beta_j,$ $r_j^i$ is otherwise disjoint from $\partial(A_j),$ and $r_j^i$ meets transversally $\alpha_j^i$ at $Q_j^i,$ for all
$i$ and $j.$ As above, $P_j^{m+1}=P_j^1$ and $r_j^{m+1}=r_j^1.$

Let $W$ be a small open tubular neighborhood of $V$ in $\mathcal{ N},$ and notice that $i_*:\mathcal{H}_1(M,\z)\to \mathcal{H}_1(W,\z)$ is an isomorphism, where $i:M\to W$ denotes the inclusion map.

Consider the admissible set $$M_0=M \cup \big( \cup_{(i,j)\in\Ical} r_j^i \big).$$ Call $\Omega_j^i$ as the closed disc in $A_j$
bounded by $\alpha_j^i\cup r_j^i \cup r_j^{i+1}$ and the compact Jordan arc $\beta_j^i\subset\beta_j$ connecting $P_j^i$ and
$P_j^{i+1},$ and containing no $P_j^k$ for $k\neq i,i+1.$ Obviously $\Omega_j^i \cap \Omega_j^{i+1}=r_j^{i+1},$
$i<m,$ $\Omega_j^{m} \cap \Omega_j^1=r_j^{1},$ and
$A_j=\cup_{i=1}^{m} \Omega_j^i.$ The region $\Omega_j^i$ is said to
be positive (respectively, negative) if $\alpha_j^i$ is positive
(respectively, negative).  See Figure \ref{fig:anillo}.
\begin{figure}[ht]
    \begin{center}
    \scalebox{0.35}{\includegraphics{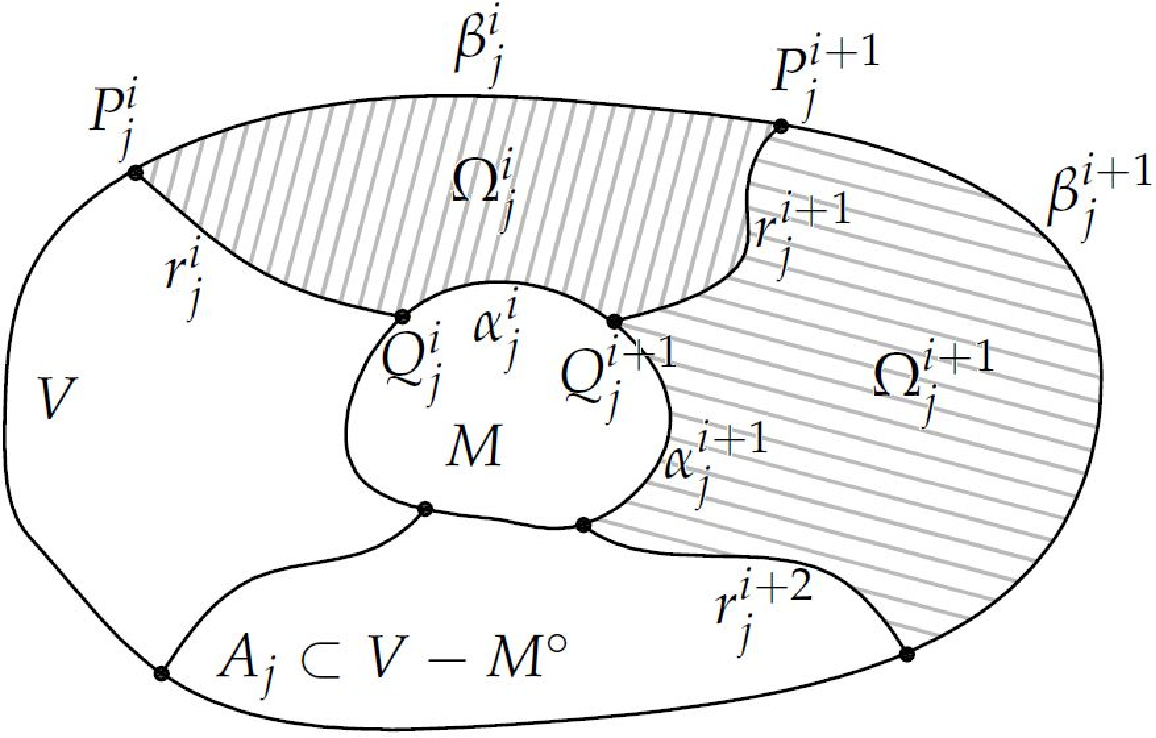}}
        \end{center}
        \vspace{-0.5cm}
\caption{The annulus $A_j.$}\label{fig:anillo}
\end{figure}

Set $\mathcal{ I}_+=\{(i,j)\in\Ical \,|\; 
\Omega_j^i \; \mbox{is positive}\}$ and $\mathcal{ I}_-=\{(i,j)\in\Ical \,|\;
\Omega_j^i \; \mbox{is negative}\}.$ Without loss of generality, and up to a symmetry with respect to the plane $\{x_1=0\},$ we suppose that $\Ical_+ \neq \emptyset.$

The proof consists of three different construction steps. In the first one we construct an immersion $H\in\Mcal(V)$  satisfying the theses of the lemma on $M_0$ (see properties {\rm \bf (1$_{H}$)} to {\rm \bf (4$_{H}$)} below). In the second step we deform $H$ on $\cup_{(i,j)\in\Ical_+}\Omega_j^i$  to obtain $Z\in\Mcal(V)$ satisfying the theses of the lemma on $M_0\cup\big(\cup_{(i,j)\in\Ical_+}\Omega_j^i\big),$ see properties {\rm \bf (1$_{Z}$)} to {\rm \bf (4$_{Z}$)} for details. Finally, in the third step of the proof (which is symmetric to the second one) we  modify $Z$ on $\cup_{(i,j)\in\Ical_-}\Omega_j^i$ to get the immersion $Y\in\Mcal(V)$ which solves the lemma. Each stage preserves the already done in the previous ones.

The {\bf first step of the proof} consists of constructing $H\in\Mcal(V)$ satisfying, among other properties, the theses of the lemma {\em just on $M_0.$} To be more precise, $H$ will satisfy that:
\begin{enumerate}[\rm \bf (1$_{H}$)]
\item $p_{H}=p.$
\item $\|H-X\|_{1,M}<\epsilon/3.$
\item $H(P_j^i) \in \Pi_{\delta+1}(\theta) \cap \Pi_{\delta+1}(-\theta)$ for all $(i,j)\in\Ical.$
\item 
$\displaystyle\left\{
\begin{array}{ll}
\text{\rm \bf (4$_{H}^+$)} & \text{$H(r_j^{i}\cup \alpha_j^i \cup  r_j^{i+1}) \subset \Pi_\delta(\theta)$ for all $(i,j)\in\Ical_+.$}\\
\text{\rm \bf (4$_{H}^-$)} & \text{$H(r_j^{i}\cup \alpha_j^i \cup  r_j^{i+1}) \subset \Pi_\delta(-\theta)$ for all $(i,j)\in\Ical_-.$}
\end{array}
\right.
$
\end{enumerate}
In particular, if $(i,j)\in \Ical_+$ and $(i+1,j)\in \Ical_-$ then $ H(r_j^{i+1})\subset \Pi_\delta(\theta)\cap \Pi_\delta(-\theta).$

We proceed as follows. Take $\hat{X}\in \mathcal{ M}_\ggot(M_0)$ such that $\hat{X}|_M=X,$  and
\begin{equation}\label{eq:c(P)}
\hat{X}(P_j^i)\in \Pi_{\delta+1}(\theta) \cap \Pi_{\delta+1}(-\theta)\quad \text{for all $(i,j)\in \Ical.$}
\end{equation}
In addition, choose $\hat{X}$
in such a way that:
\begin{equation}\label{eq:1+1-}
\text{$\hat{X}(r_j^i\cup  r_j^{i+1})\subset \Pi_\delta(\theta)$ for all $(i,j)\in \Ical_+$}\quad\text{and}\quad
\text{$\hat{X}(r_j^i\cup  r_j^{i+1})\subset \Pi_\delta(-\theta)$ for all $(i,j)\in \Ical_-.$}
\end{equation}
\begin{figure}[ht]
    \begin{center}
    \scalebox{0.55}{\includegraphics{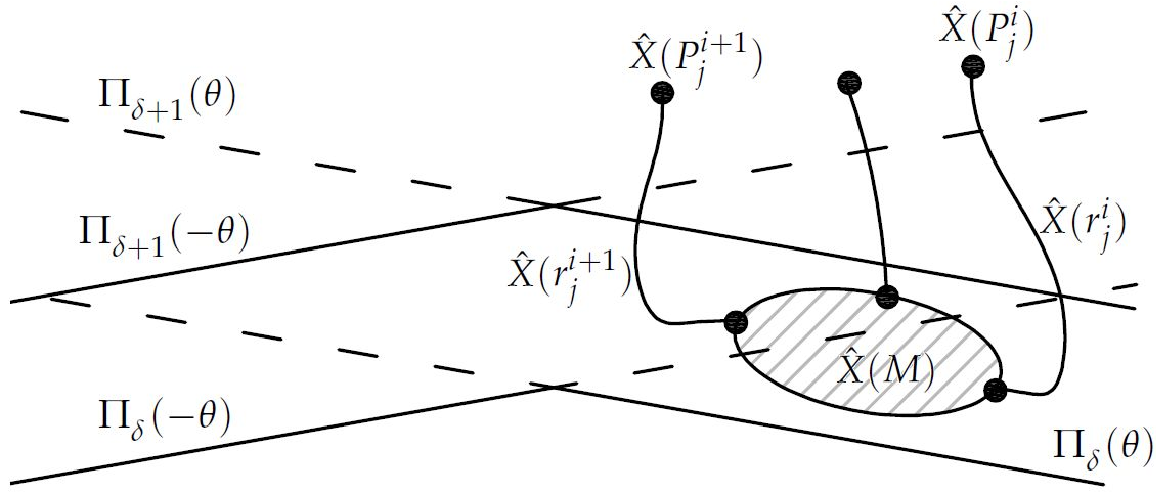}}
        \end{center}
        \vspace{-0.5cm}
\caption{$\hat{X}(M_0).$}\label{fig:step1}
\end{figure}
See Figure \ref{fig:step1}. The existence of  such a $\hat{X}$ is elementary. 
 Choose any arbitrary smooth normal field $\varpi_0$ along $C_{M_0}=\cup_{(i,j)\in\Ical}r_j^i$ respect to $\hat{X}$ so that $\hat{X}_{\varpi_0}\in\mathcal{M}_\ggot^*(M_0).$ Applying Theorem \ref{co:immaprox} to 
$\hat{X}_{\varpi_0},$ $W$ and a small enough $\xi\in(0,\epsilon/3)$ one can find $H \in \mathcal{
M}(V)$ such that
$p_{H}=p_{\hat{X}_{\varpi_0}}=p_X=p$ (hence {\bf (1$_H$)} holds) and
\begin{equation}\label{eq:Y0-X0}
 \|H-\hat{X}_{\varpi_0}\|_{1,M_0}<\xi<\epsilon/3\quad\text{(hence {\bf (2$_H$)} holds).}
\end{equation}
Properties {\bf (3$_H$)} and {\bf (4$_H$)}  follow from \eqref{eq:c(P)}, \eqref{eq:1+1-} and \eqref{eq:Y0-X0} provided that $\xi$ is chosen small enough. This concludes the construction of $H.$

In the {\bf second step of the proof}, we will deform $H$ hardly on $M\cup\big(\cup_{(i,j) \in \mathcal{ I}_-} \Omega_j^i\big)$ and strongly on $V-\big[M\cup\big(\cup_{(i,j) \in \mathcal{ I}_-} \Omega_j^i\big)\big]$ to obtain a new immersion $Z\in\Mcal(V).$ This deformation will preserve the coordinate function $x_3+\tan(\theta) x_1,$ that is to say,
\[
(x_3+\tan(\theta) x_1)\circ H=(x_3+\tan(\theta) x_1)\circ Z\quad\text{on $V.$} 
\] 
Furthermore, $Z$ will satisfy the theses of Lemma \ref{lem:fun} {\em just on} $M_0\cup(\cup_{(i,j) \in \mathcal{ I}_+} \Omega_j^i).$ 
To be more precise, $Z$ will satisfy that:
\begin{enumerate}[\rm \bf (1$_Z$) ]
\item $p_{Z}=p_H=p.$
\item $\|Z-H\|_{1,M}<\epsilon/3.$
\item $\displaystyle\left\{
\begin{array}{ll}
\text{\rm \bf (3$_Z^+$)} & \text{$Z(\beta_j^i) \subset \Pi_{\delta+1}(\theta) \cup \Pi_{\delta+1}(-\theta)$ for all $(i,j)\in\Ical_+.$}\\
\text{\rm \bf (3$_Z^-$)} & \text{$Z(\{P_j^i,P_j^{i+1}\}) \subset \Pi_{\delta+1}(\theta) \cap \Pi_{\delta+1}(-\theta)$ for all $(i,j)\in\Ical_-.$}
\end{array}
\right.
$

\item $\displaystyle\left\{
\begin{array}{ll}
\text{\rm \bf (4$_Z^+$)} & \text{$Z(\Omega_j^i) \subset \Pi_\delta(\theta) \cup \Pi_\delta(-\theta)$ for all  $(i,j) \in \Ical_+.$}\\
\text{\rm \bf (4$_Z^-$)} & \text{$Z(r_j^{i}\cup \alpha_j^i \cup  r_j^{i+1}) \subset \Pi_\delta(-\theta)$ for all $(i,j)\in\Ical_-.$}
\end{array}
\right.
$
\end{enumerate}

In order to construct $Z$ and for a simpler writing, it will be convenient to rotate $H$  as follows. Let $L^+:\r^3 \to \r^3$ denote
the counterclockwise rotation of angle $\theta$ around the straight line parallel to the $x_2$-axis and containing the
point $(0,0,\delta).$ As $\theta\in(0,\pi/4)$ then 
\begin{multline}\label{eq:giro+}
L^+(\Pi_\delta(\theta))=\Pi_\delta(0),\quad  L^+(\Pi_\delta(-\theta))=\Pi_\delta(-2\theta),\\
L^+(\Pi_{\delta+1}(\theta))=\Pi_{\delta_1}(0)\quad\text{and}\quad L^+(\Pi_{\delta+1}(-\theta))=\Pi_{\delta_2}(-2\theta),
\end{multline}
where $\delta_1=\delta+\cos(\theta)$ and $\delta_2=\cos(\theta)+\sin(\theta)\cot(2\theta).$

Call $H^+=(H^+_j)_{j=1,2,3}:=L^+ \circ H \in \mathcal{ M}(V),$ and notice that $H_3^+=(x_3+\tan(\theta) x_1)\circ H$ on $V.$

For any $(i,j)\in\mathcal{ I}_+,$ let $K_j^i$ be a closed disc with analytical boundary in $\Omega_j^i$ such that $K_j^i \cap \partial(\Omega_j^i)$ consists of a (non-empty) compact Jordan arc in $\beta_j^i-\{P_j^i,P_j^{i+1}\},$
\begin{equation}\label{eq:a1'}
H^+(\beta_j^i-K_j^i)\subset L^+\big(\Pi_{\delta+1}(\theta) \cap \Pi_{\delta+1}(-\theta)\big)\subset \Pi_{\delta_1}(0)
\end{equation}
(that is to say, $H_3^+>\delta_1$ on $\beta_j^i-K_j^i$), and
\begin{equation}\label{eq:a1}
H^+(\overline{\Omega_j^i-K_j^i}) \subset  \Pi_\delta(0)
\end{equation}
(that is to say, $H_3^+>\delta$ on $\overline{\Omega_j^i-K_j^i}$).
This choice is possible from {\bf (3$_H$)}, {\bf (4$_H^+$)}, \eqref{eq:giro+} and a continuity argument, provided that $K_j^i$ is chosen large enough in $\Omega_j^i.$ See Figures \ref{fig:omega} and \ref{fig:step2}.

\begin{figure}[ht]
    \begin{center}
    \scalebox{0.3}{\includegraphics{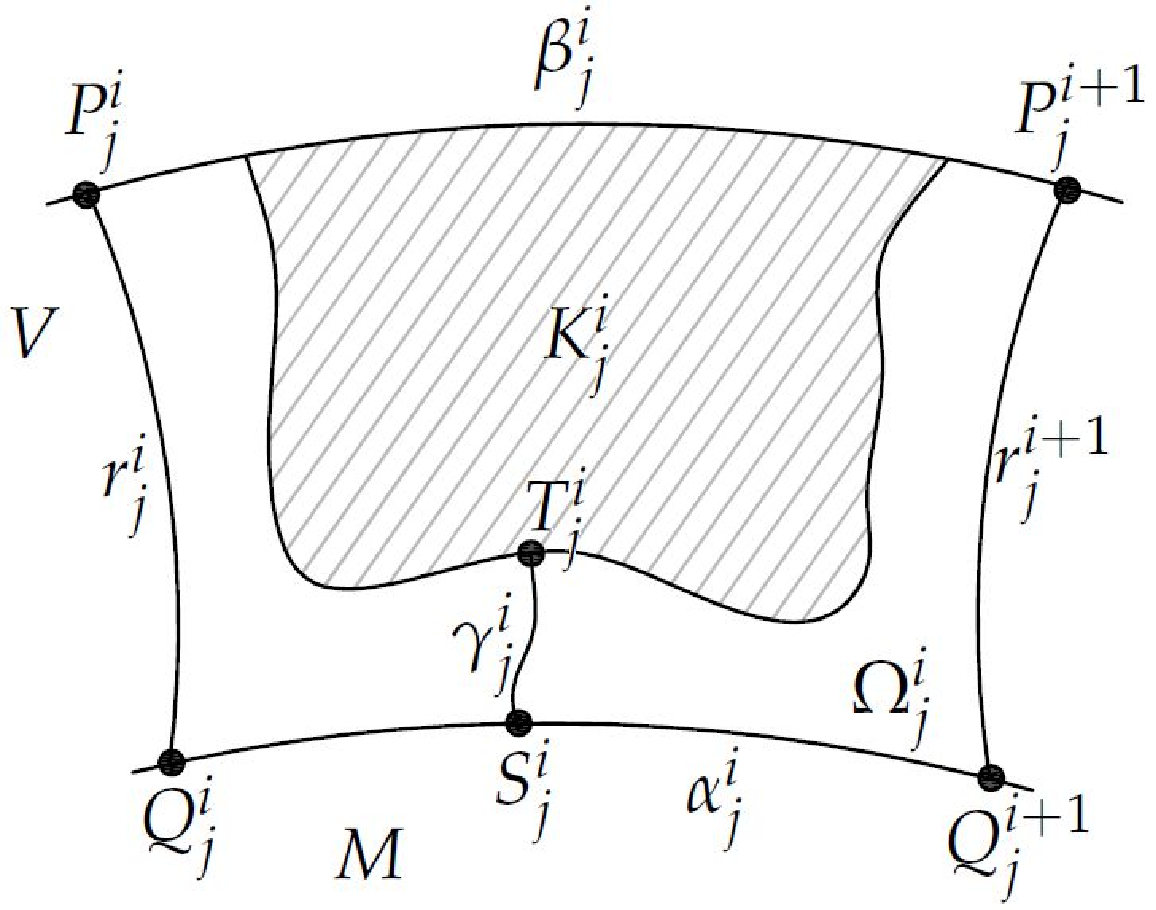}}
        \end{center}
        \vspace{-0.5cm}
\caption{The set $\Omega_j^i.$}\label{fig:omega}
\end{figure}

Since $-2\theta\in (-\pi/2,0)$ and $H^+(\cup_{(i,j) \in \mathcal{ I}_+} K_j^i)$ is compact, there exists $\lambda^+>0$ such that
\begin{equation}\label{eq:lambda+}
(-\lambda^+,0,0)+H^+(\cup_{(i,j) \in \mathcal{ I}_+} K_j^i)\subset \Pi_{\delta_2}(-2\theta) = L^+(\Pi_{\delta+1}(-\theta)).
\end{equation}

The key idea in this deformation stage is to {\em push} $(H^+_1,H^+_3)(\cup_{(i,j)\in\mathcal{ I}_+} K_j^i)\subset\r^2$ to the left in the direction of the $x_1$-axis a distance $\lambda^+,$ while preserving $H_3^+$ on $V$ (see Figure \ref{fig:step2}) and hardly modifying $H^+$ on $M\cup\big(\cup_{(i,j) \in \mathcal{ I}_-} \Omega_j^i\big).$ In this way we obtain a new immersion $Z^+\in \Mcal (V)$ such that $x_3\circ Z^+=H_3^+$ on $V$ and $Z^+(\cup_{(i,j)\in\Ical_+} K_j^i)\subset L^+(\Pi_{\delta+1}(-\theta)).$   By \eqref{eq:a1'}, \eqref{eq:a1} and \eqref{eq:lambda+},  $Z:= (L^+)^{-1} \circ Z^+$  will satisfy the desired properties. No matter the values of both $x_2\circ Z^+$ on $V-\big[M\cup\big(\cup_{(i,j) \in \mathcal{ I}_-} \Omega_j^i\big)\big]$ and $x_1\circ Z^+$ on $V-\big[M\cup\big(\cup_{(i,j) \in \mathcal{ I}_+} K_j^i\big)\cup\big(\cup_{(i,j) \in \mathcal{ I}_-} \Omega_j^i\big)\big].$

\begin{figure}[ht]
    \begin{center}
    \scalebox{0.65}{\includegraphics{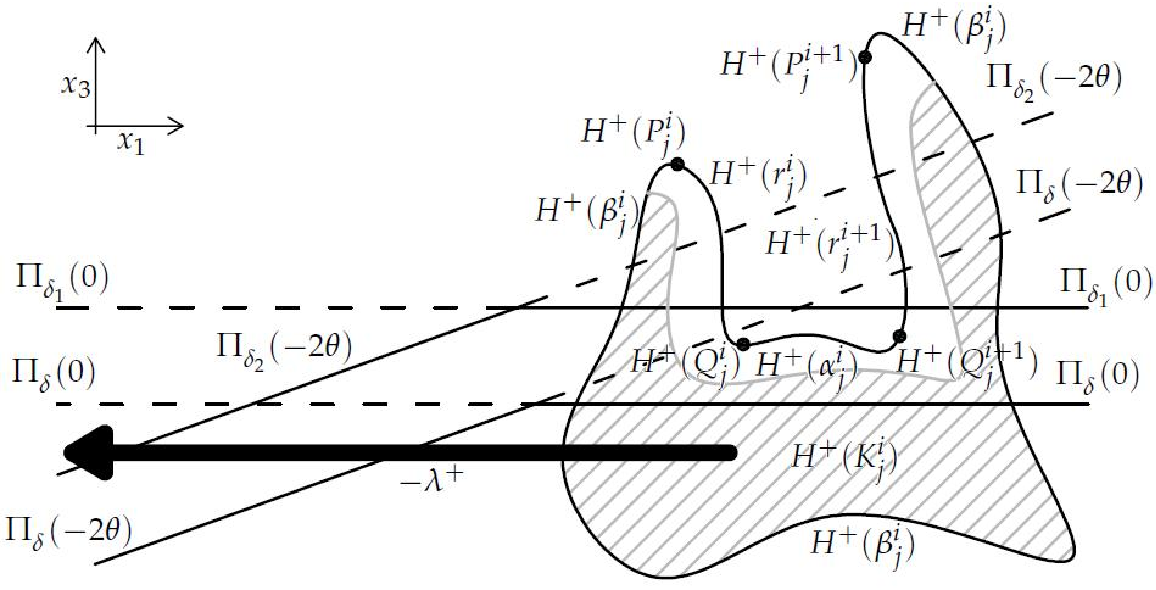}}
        \end{center}
        \vspace{-0.5cm}
\caption{$H^+(\Omega_j^i),$ $(i,j)\in\Ical_+,$ and the second deformation stage.}\label{fig:step2}
\end{figure}

To carry out this deformation, we have to introduce a suitable admissible set and marked immersion on it. 
For any $(i,j)\in\Ical_+,$ let $\gamma_j^i$ be a compact analytical Jordan arc in $\Omega_j^i$ satisfying that the endpoints $S_j^i$ and $T_j^i$  of $\gamma_j^i$ lie in $\alpha_j^i-\{Q_j^i,Q_j^{i+1}\}$
 and $\partial(K_j^i)-\beta_j^i,$ respectively, and $\gamma_j^i$
 is otherwise disjoint from $K_j^i \cup \partial(\Omega_j^i).$ Without loss of generality, we can suppose that $\gamma_j^i$ and $\alpha_j^i$ (resp., $\partial(K_j^i)$) meet transversally at $S_j^i$ (resp., $T_j^i$) and $\partial H_3^+$ never vanishes on $\gamma_j^i.$ See Figure \ref{fig:omega}. Consider the admissible set
$$S_+=\big(M\cup\big(\cup_{(i,j) \in \mathcal{ I}_-} \Omega_j^i\big)
\big)  \cup \big( \cup_{(i,j) \in \mathcal{ I}_+} (K_j^i \cup
\gamma_j^i)\big),$$ and notice that $M_{S_+}=M\cup\big(\cup_{(i,j) \in \mathcal{ I}_-} \Omega_j^i\big)\cup\big(\cup_{(i,j) \in \mathcal{ I}_+} K_j^i\big)$ and $C_{S_+}=\cup_{(i,j)\in \Ical_+} \gamma_j^i.$  

\begin{assertion} \label{ass:inter}
There exists $\hat{H}^+_{\varpi_+}\in \mathcal{M}_\ggot^*(S_+),$ where $\hat{H}^+=(\hat{H}^+_j)_{j=1,2,3},$ such that 
\begin{enumerate}[{\rm (i)}]
\item $\hat{H}^+= H^+$ on $M\cup\big(\cup_{(i,j) \in \mathcal{ I}_-} \Omega_j^i\big),$
\item $\hat{H}^+_1=H^+_1-\lambda^+$  on $\cup_{(i,j) \in \mathcal{I}_+} K_j^i,$ 
\item $\hat{H}^+_3= H^+_3$ and $(\partial \hat{H}^+_{\varpi_+})_3=\partial H^+_3$ on $S_+.$
\end{enumerate}
\end{assertion}

\begin{proof} Call $\psi_3^+:=\partial H_3^+,$ and write $g^+$ for the meromorphic Gauss map of $H^+.$

Consider any smooth $\hat{g}^+\in \mathcal{F}_\mgot(S)$ such that: $\hat{g}^+=g^+$ on $M_{S_+},$   $\hat{g}^+$ never vanishes on   $C_{S^+}$ and $\frac1{2}\mbox{Re}\big(\int_{\gamma_{i,j}} (1/\hat{g}^+-\hat{g}^+) \psi_3^+ \big)=H_1^+(T_{i,j})-H_1^+(S_{i,j})-\lambda^+,$ where we have oriented $\gamma_j^i$ with initial point $S_{i,j}$ and final point $T_{i,j},$ $(i,j) \in \Ical_+.$ The existence of a such $\hat{g}^+$ follows from the surjectivity of the continuous map $$J:\mathcal{V} \to \r^{\nu_+},\quad  J(h)=\big( \mbox{Re} \big(\int_{\gamma_j^i} (1/h-h)\psi_3^+ \big) \big)_{(i,j) \in \Ical_+},$$ 
where $\mathcal{V}$ is the space   $\{h \in \mathcal{F}_\mgot(S)\,|\;$ $h$ is smooth, $h=g^+$ on $M_{S_+}$ and $h$ never vanishes on $C_{S_+}\}$ endowed with the $\Ccal^0$ topology on $S,$ and $\nu_+$ is the number of elements of $\Ical_+.$

If suffices to set $\hat{H}^+:S_+ \to \r^3,$ 
$\hat{H}^+=H^+(P_0^+)+\int_{P_0^+} \hat{\Psi}^+,$ and $\varpi_+(s):=\mbox{Im}\big(\hat{\Psi}^+ (({\gamma_j^i})'(s))\big),$ where
$$\hat{\Psi}^+= \big(\frac1{2} (1/\hat{g}^+-\hat{g}^+),\frac{i}{2}(1/\hat{g}^++\hat{g}^+),1\big)\psi_3^+,$$ $P_0^+ \in M_{S_+},$ and 
$s$ is the arclength parameter along $\gamma_j^i,$ $(i,j) \in \Ical_+.$ 
\end{proof}

Applying Theorem \ref{co:immaprox} to $\hat{H}^+_{\varpi_+},$ $W$ and a small enough $\xi\in(0,\epsilon/3),$ there exists $Z^+\in \mathcal{ M}(V)$ such that 
\begin{equation}\label{eq:23}
\|Z^+-\hat{H}^+_{\varpi_+}\|_{1,S_+}<\xi<\epsilon/3,
\end{equation}
\begin{equation}\label{eq:pXn+}
p_{Z^+}=p_{\hat{H}^+_{\varpi_+}}=p_{H^+}
\end{equation} 
and $x_3\circ Z^+|_{S_+}=x_3\circ \hat{H}^+.$ In particular, $x_3\circ Z^+=x_3\circ H^+$ on $V$ (see Claim \ref{ass:inter}-(iii)).

%
%
%

Then, one has:
\begin{enumerate}[{\rm ({a}1)}]
\item $\|Z^+-H^+\|_{1,M}<\epsilon/3.$ See \eqref{eq:23} and Claim \ref{ass:inter}-(i).

\item $Z^+\big(\cup_{(i,j)\in \mathcal{ I}_+}\overline{\Omega_j^i-K_j^i}\big) \subset \Pi_\delta(0).$ Indeed, by \eqref{eq:a1} one has $H^+(\cup_{(i,j)\in \mathcal{ I}_+}\overline{\Omega_j^i-K_j^i}) \subset \Pi_\delta(0)=\{x_3>\delta\}.$ Since $x_3\circ Z^+=x_3\circ H^+,$ then the inclusion holds. 
  
\item $Z^+\big(\cup_{(i,j)\in \mathcal{ I}_+}(\beta_j^i -K_j^i)\big)\subset \Pi_{\delta_1}(0).$ Indeed, by \eqref{eq:a1'} one infers that $H^+(\cup_{(i,j)\in \mathcal{ I}_+}(\beta_j^i -K_j^i))\subset \Pi_{\delta_1}(0)=\{x_3>\delta_1\}.$ Since $x_3\circ Z^+=x_3\circ H^+,$ we are done.
\end{enumerate}
Furthermore, if $\xi>0$ is chosen small enough then, from \eqref{eq:23},
\begin{enumerate}[{\rm ({a}1)}]
\item[(a4)] $Z^+(\cup_{(i,j) \in \mathcal{ I}_+} K_j^i)\subset \Pi_{\delta_2}(-2\theta).$  Take into account \eqref{eq:lambda+} and Claim \ref{ass:inter}-(ii).

\item[(a5)] $Z^+(\{P_j^i,P_j^{i+1}\})\subset \Pi_{\delta_2}(-2\theta) \cap \Pi_{\delta_1}(0),$ for any $(i,j)\in\mathcal{ I}_-.$ Use {\bf (3$_H$)}, \eqref{eq:giro+} and Claim \ref{ass:inter}-(i).

\item[(a6)] $Z^+(r_j^{i}\cup \alpha_j^i \cup  r_j^{i+1})\subset \Pi_{\delta}(-2\theta)$ for any $(i,j)\in\mathcal{
  I}_-.$ It follows from {\bf (4$_H^-$)}, \eqref{eq:giro+} and Claim \ref{ass:inter}-(i).
\end{enumerate}

Taking into account \eqref{eq:giro+}, it is not hard to check that the immersion $Z:=(L^+)^{-1}\circ Z^+\in\Mcal(V)$ satisfies the desired properties. Indeed, {\bf (1$_Z$)}, {\bf (2$_Z$)}, {\bf (3$_Z^-$)} and {\bf (4$_Z^-$)} follow from \eqref{eq:pXn+}, \eqref{eq:23}, (a5) and (a6), respectively. Finally, {\bf (3$_Z^+$)} follows from (a3) and (a4), whereas  {\bf (4$_Z^+$)} follows from (a2) and (a4).

This concludes the second step of the proof.

The {\bf third step of the proof} is symmetric to the second one. We will deform $Z$ hardly on $M\cup\big(\cup_{(i,j) \in \mathcal{ I}_+} \Omega_j^i\big)$ and strongly on $V-\big[M\cup\big(\cup_{(i,j) \in \mathcal{ I}_+} \Omega_j^i\big)\big].$ Now we will preserve the coordinate function $(x_3-\tan(\theta) x_1)\circ Z$ on $V.$ The so-arising immersion $Y\in\Mcal(V)$ will be the solution of the lemma. To be more precise, it will verify
\begin{enumerate}[\rm \bf (1$_Y$)]
\item $p_{Y}=p_Z=p.$
\item $\|Y-Z\|_{1,M}<\epsilon/3.$
\item $Y(\partial V)\subset \Pi_{\delta+1}(\theta) \cup \Pi_{\delta+1}(-\theta).$
\item $Y(V-M) \subset \Pi_\delta(\theta) \cup \Pi_\delta(-\theta).$
\end{enumerate}

\begin{remark}
If $\Ical_-=\emptyset,$ it suffices to set $Y:=Z$ and notice that properties {\rm \bf (1$_Y$)} to {\rm \bf (4$_Y$)} follow directly from {\rm \bf (1$_Z$)},  {\rm \bf (2$_Z$)},  {\rm \bf (3$^+_Z$)} and {\rm \bf (4$^+_Z$)} above.
\end{remark}
Assume that $\Ical_-\neq \emptyset,$ and let us construct $Y.$ 

First, set $L^-:=(L^+)^{-1}$ and observe that 
\begin{multline}\label{eq:giro-}
L^-(\Pi_\delta(\theta))=\Pi_\delta(2\theta),\quad L^-(\Pi_\delta(-\theta))=\Pi_\delta(0),\\ L^-(\Pi_{\delta+1}(\theta))=\Pi_{\delta_2}(2\theta) \quad\text{and}\quad L^-(\Pi_{\delta+1}(-\theta))=\Pi_{\delta_1}(0).
\end{multline}
Denote by $Z^-:=L^-\circ Z \in \mathcal{ M}(V),$ and notice that $x_3\circ Z^-=(x_3-\tan(\theta) x_1)\circ Z$ on $V.$ 

Taking into account \eqref{eq:giro-}, properties {\bf (3$_Z$)} and {\bf (4$_Z$)} can be rewritten in terms of $Z^-$ as follows:
\begin{enumerate}[\rm \bf (1$_{Z^-}$)]
\item[{\bf (3$_{Z^-}$)}] $\displaystyle\left\{\begin{array}{ll}
\text{\rm \bf (3$_{Z^-}^+$)} & \text{$Z^-(\beta_j^i) \subset \Pi_{\delta_2}(2\theta) \cup \Pi_{\delta_1}(0)$ for all $(i,j)\in\Ical_+.$}\\
\text{\rm \bf (3$_{Z^-}^-$)} & \text{$Z^-(\{P_j^i,P_j^{i+1}\}) \subset \Pi_{\delta_2}(2\theta) \cap \Pi_{\delta_1}(0)$ for all $(i,j)\in\Ical_-.$}
\end{array}
\right.
$
\item[{\bf (4$_{Z^-}$)}] $\displaystyle\left\{\begin{array}{ll}
\text{\rm \bf (4$_{Z^-}^+$)} & \text{$Z^-(\Omega_j^i) \subset \Pi_\delta(2\theta) \cup \Pi_\delta(0)$ for all  $(i,j) \in \Ical_+.$}\\
\text{\rm \bf (4$_{Z^-}^-$)} & \text{$Z^-(r_j^{i}\cup \alpha_j^i \cup  r_j^{i+1}) \subset \Pi_\delta(0)$ for all $(i,j)\in\Ical_-.$}
\end{array}
\right.
$
\end{enumerate}

%
%
%
%
%
%
%

For any $(i,j)\in\mathcal{ I}_-$ let $K_j^i$ be a closed disc with analytical boundary in
$\Omega_j^i$ such that $K_j^i \cap \partial(\Omega_j^i)\neq \emptyset$ consists of a compact Jordan arc in $\beta_j^i-\{P_j^i,P_j^{i+1}\},$
\begin{equation}\label{eq:a1-'}
Z^-(\beta_j^i-K_j^i)\subset \Pi_{\delta_2}(2\theta) \cap \Pi_{\delta_1}(0)\subset \Pi_{\delta_1}(0)
\end{equation}
and
\begin{equation}\label{eq:a1-}
Z^-(\overline{\Omega_j^i-K_j^i}) \subset \Pi_\delta(0).
\end{equation}
This choice is possible from {\bf (3$_{Z^-}^-$)}, {\bf (4$_{Z^-}^-$)} and a continuity argument, provided that $K_j^i$ is chosen large enough in $\Omega_j^i.$

Since $2\theta\in(0,\pi/2)$ and $Z^-(\cup_{(i,j) \in \mathcal{ I}_-} K_j^i)$ is compact, then there exists $\lambda^->0$ such that
\begin{equation}\label{eq:lambda-}
(\lambda^-,0,0)+Z^-(\cup_{(i,j) \in \mathcal{ I}_-} K_j^i)\subset \Pi_{\delta_2}(2\theta).
\end{equation}

Now the idea is to  push $(Z^-_1,Z^-_3)(\cup_{(i,j)\in\mathcal{ I}_-} K_j^i)\subset\r^2$ to the right in the direction of the $x_1$-axis a distance $\lambda^-,$ while preserving $Z^-_3$ on $V$ and hardly modifying $Z^-$ on $M\cup\big( \cup_{(i,j) \in \mathcal{ I}_+} \Omega_j^i\big).$ By \eqref{eq:a1-'}, \eqref{eq:a1-} and \eqref{eq:lambda-}, the arising immersion $Y^-$ will satisfy the desired properties (up to composing with $L^+,$ we get $Y$). This time, no matter the values of both $x_2\circ Z^-$ on $V-\big[M\cup\big( \cup_{(i,j) \in \mathcal{ I}_+} \Omega_j^i\big)\big]$ and $x_1\circ Z^-$ on $V-\big[M\cup\big( \cup_{(i,j)\in \Ical_-}K_j^i\big)\cup\big( \cup_{(i,j) \in \mathcal{ I}_+} \Omega_j^i\big)\big].$ 

We proceed as in the previous step.

For any $(i,j)\in\Ical_-,$ let $\gamma_j^i$ be a compact analytical Jordan arc in $\Omega_j^i$ satisfying that the endpoints $S_j^i$ and $T_j^i$  of $\gamma_j^i$ lie in $\alpha_j^i-\{Q_j^i,Q_j^{i+1}\}$
 and $\partial(K_j^i)-\beta_j^i,$ respectively, and $\gamma_j^i$
 is otherwise disjoint from $K_j^i \cup \partial(\Omega_j^i).$ Without loss of generality, we can assume that $\gamma_j^i$ and $\alpha_j^i$ (resp., $\partial(K_j^i)$) meet transversally at $S_j^i$ (resp., $T_j^i$) and $\partial Z_3^-$ never vanishes on $\gamma_j^i.$  Consider the admissible subset
$$S_-=M\cup\big( \cup_{(i,j) \in \mathcal{ I}_+} \Omega_j^i
\big)  \cup \big( \cup_{(i,j) \in \mathcal{ I}_-} (K_j^i \cup
\gamma_j^i)\big).$$ 

Following the arguments in Claim \ref{ass:inter} one can prove the following 
\begin{assertion}\label{ass:inter-}
There exists $\hat{Z}^-_{\varpi_-}\in \mathcal{M}_\ggot^*(S_-),$ where $\hat{Z}^-=(\hat{Z}^-_j)_{j=1,2,3},$ such that 
\begin{enumerate}[{\rm (i)}]
\item $\hat{Z}^-= Z^-$ on $M\cup\big( \cup_{(i,j) \in \mathcal{ I}_+} \Omega_j^i\big),$
\item $\hat{Z}^-_1=Z^-_1+\lambda^-$  on $\cup_{(i,j) \in \mathcal{I}_-} K_j^i,$
\item $\hat{Z}^-_3= Z^-_3$ and $(\partial \hat{Z}^-_{\varpi_-})_3=\partial Z^-_3$ on $S_-.$
\end{enumerate}
\end{assertion}

Again by Theorem \ref{co:immaprox} applied to $\hat{Z}^-_{\varpi_-},$ $W$ and a small enough $\xi\in(0,\epsilon/3),$ there exists $Y^-\in\mathcal{M}(V)$ such that
\begin{equation}\label{eq:33}
\|Y^- -\hat{Z}^-_{\varpi_-}\|_{1,S_-}<\xi<\epsilon/3,
\end{equation}
\begin{equation}\label{eq:pFn-}
p_{Y^-}=p_{\hat{Z}^-_{\varpi_-}}=p_{Z^-}=L^-\circ p
\end{equation}
and $x_3\circ Y^-|_{S_-}=x_3\circ \hat{Z}^-,$ hence $x_3\circ Y^-=x_3\circ Z^-$ on $V$ (see Claim \ref{ass:inter-}-(iii)).


Arguing as above, if $\xi$ is small enough one has
\begin{enumerate}[{\rm ({b}1)}]
\item $\|Y^--Z^-\|_{1,M}<\epsilon/3.$ Use \eqref{eq:33} and Claim \ref{ass:inter-}-(i).

\item $Y^- \big( \cup_{(i,j)\in \mathcal{ I}_-}\overline{\Omega_j^i-K_j^i} \big)\subset \Pi_{\delta}(0).$ Use that $x_3\circ Y^-=x_3\circ Z^-$ on $V$ and \eqref{eq:a1-}.
  
\item $Y^-\big( \cup_{(i,j)\in \mathcal{ I}_-}(\beta_j^i -K_j^i) \big)\subset\Pi_{\delta_1}(0).$ Use that $x_3\circ Y^-=x_3\circ Z^-$ on $V$ and \eqref{eq:a1-'}.

\item $Y^-( \cup_{(i,j) \in \mathcal{ I}_-} K_j^i )\subset \Pi_{\delta_2}(2\theta).$ Take into account \eqref{eq:lambda-}, \eqref{eq:33} and Claim \ref{ass:inter-}-(ii).
\item $Y^-( \cup_{(i,j)\in \mathcal{ I}_+}\beta_j^i)\subset \Pi_{\delta_2}(2\theta)\cup\Pi_{\delta_1}(0).$ It is implied by {\bf (3$_{Z^-}^+$)} and \eqref{eq:33}.

\item $Y^-(\cup_{(i,j)\in \mathcal{I}_+}\Omega_j^i)\subset \Pi_\delta(0)\cup \Pi_\delta(2\theta).$ See {\bf (4$_{Z^-}^+$)} and \eqref{eq:33}.

\end{enumerate}

Taking into account \eqref{eq:giro-}, it is easy to check that the immersion $Y:=L^+\circ Y^-\in\mathcal{ M}(V)$ satisfies the desired properties. Indeed, properties {\bf (1$_{Y}$)} and {\bf (2$_{Y}$)}     follow from  \eqref{eq:pFn-} and (b1), respectively. Property  {\bf (3$_{Y}$)} follows from (b3), (b4) and (b5), whereas {\bf (4$_{Y}$)} from (b2), (b4) and (b6). This concludes the third step of the proof.

To check that $Y$ {\bf solves the lemma}, observe that {\bf (1$_{Y}$)} proves {\bf (1)}, {\bf (2$_{H}$)}, {\bf (2$_{Z}$)} and {\bf (2$_{Y}$)} imply {\bf (2)},  {\bf (3$_{Y}$)}$=${\bf (3)} and {\bf (4$_{Y}$)}$=${\bf (4)}.  This concludes the proof of Lemma \ref{lem:car0}.

\subsection{Proof of Lemma \ref{lem:fun}}

Since $M$ is Runge, then for any component $C$ of $V-M^\circ$ one has $\partial(C)\cap\partial(V)\neq\emptyset.$ In particular $V-M^\circ$ does not contain closed discs and $-\chi(V-M^\circ)\in\n\cup\{0\}.$ 

The proof goes by induction on
$-\chi(V-M^\circ).$ Lemma \ref{lem:car0} shows the basis of the induction: the result holds for $-\chi(V-M^\circ)=0.$ To check the inductive step, assume that 
Lemma \ref{lem:fun} holds for $-\chi(V-M^\circ)=m\in\n\cup\{0\}$ and let us prove it for $-\chi(V-M^\circ)= m+1.$

Since $-\chi(V-M^\circ)=m+1>0,$ there exists  an
analytic Jordan curve $\hat{\gamma}\in \mathcal{ H}_1(V,\z)-\mathcal{
H}_1(M,\z)$ intersecting $V-M^\circ$ in a Jordan arc $\gamma$ with endpoints $P_1,P_2\in
\partial (M)$ and otherwise disjoint from $\partial (M).$ Without loss of generality we can assume that $\gamma$ matches smoothly with $M,$ and so $M\cup\gamma$ is admissible.
Consider
$F_\varpi\in\mathcal{ M}_\ggot^*(M\cup \gamma)$ such that $F|_M=X,$
$F(\gamma)\subset \Pi_\delta(\theta) \cup \Pi_\delta(-\theta),$ and
$p_{F_\varpi}(\hat{\gamma})=p(\hat{\gamma}).$ Here, we have taken into account
that $F(P_i)\in \Pi_\delta(\theta) \cup \Pi_\delta(-\theta),$ for $i=1,2.$
Notice that $p_{F_\varpi}=p|_{\mathcal{ H}_1(M\cup\gamma,\z)}.$ 

Let $W\subset V^\circ$ be a small open tubular neighborhood of $M\cup\gamma$ in $\mathcal{ N}.$ Notice that $i_*:\mathcal{H}_1(M\cup\gamma,\z)\to \mathcal{H}_1(W,\z)$ is an isomorphism, where $i:M\cup\gamma\to W$ is the inclusion map. Applying Theorem \ref{co:immaprox} to
$F_\varpi,$ $S=M\cup\gamma$ and  $W,$ we can find a compact region $M'$ with non-empty analytical boundary and a minimal immersion
$Z\in\mathcal{ M}(M')$  such that 
\begin{itemize}
\item $M\cup\gamma \subset (M')^\circ
\subset M'\subset W\subset V^\circ,$ $j_*:\mathcal{H}_1(M\cup\gamma,\z)\to \mathcal{H}_1(M',\z)$ is an isomorphism, where $j:M\cup\gamma\to M'$ is the inclusion map,    $-\chi(V-(M')^\circ)= m,$ $M'$ is Runge in $\mathcal{ N},$
\item $\|Z-X\|_{1,M} <\epsilon/2,$  $Z(\partial(M'))\subset \Pi_\delta(\theta) \cup \Pi_\delta(-\theta)$ and  $p_{Z}=p|_{\mathcal{ H}_1(M',\z)}.$
\end{itemize}
 Then, applying the induction hypothesis
to $M',$ $V,$ $Z,$ $\delta,$ $\theta$ and $\epsilon/2,$ we obtain an
immersion $Y\in \mathcal{ M}(V)$ which satisfies the conclusion of the
Lemma.

The proof is done.


\subsection{Main Theorem}

Now we can prove the main theorem of this section.

\begin{theorem} \label{th:fun}
Let $p:\mathcal{ H}_1(\mathcal{ N},\z) \to \r^3$ and $\theta$ be a group morphism  and a real number in  $(0,\pi/2),$ respectively.
Let $M \subset \mathcal{ N}$ be a Runge compact region, and consider a non-flat  $Y\in \mathcal{ M}(M)$  satisfying that $p_{Y}=p|_{\mathcal{ H}_1(M,\z)}$ and $(x_3+\tan (\theta) |x_1|) \circ Y>1.$

Then for any $\epsilon > 0$ there exists a conformal minimal immersion $X:\mathcal{ N} \to \r^3$ satisfying the following properties:
\begin{itemize}
  \item $p_X=p,$
  \item $(x_3+\tan (\theta) |x_1|)\circ X:\mathcal{ N}\to\r$ is a positive proper function, and
  \item $\|X-Y\|_{1,M}< \epsilon.$
\end{itemize}
\end{theorem}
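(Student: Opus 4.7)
The proof is an exhaustion/approximation scheme with Lemma \ref{lem:fun} as the inductive step. Replace $\epsilon$ by $\epsilon':=\min\{\epsilon,1/2\}$ and set $\epsilon_n:=\epsilon'/2^n$. Choose an exhaustion $M=M_0\subset M_1\subset M_2\subset\cdots$ of $\mathcal{ N}$ by compact regions with analytic boundary such that $M_n\subset M_{n+1}^\circ$, $\bigcup_{n} M_n=\mathcal{ N}$, and $\mathcal{ N}-M_n$ has no bounded components in $\mathcal{ N}$ for every $n$ (standard for open Riemann surfaces).

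I would then construct inductively $X_n\in\mathcal{ M}(M_n)$, starting from $X_0:=Y$, so that for all $n\ge 0$:
\begin{itemize}
\item[\rm (a)] $p_{X_n}=p|_{\mathcal{ H}_1(M_n,\z)}$;
\item[\rm (b)] $X_n(\partial(M_n))\subset \Pi^*_{n+1}(\theta)\cup\Pi^*_{n+1}(-\theta)$;
\item[\rm (c)] for $n\ge 1$, $X_n\bigl(\overline{M_n-M_{n-1}^\circ}\bigr)\subset \Pi^*_{n}(\theta)\cup\Pi^*_{n}(-\theta)$ and $\|X_n-X_{n-1}\|<\epsilon_n$ on $M_{n-1}$.
\end{itemize}
The base case holds because $(x_3+\tan\theta|x_1|)\circ Y>1$ on $M$ gives (b) for $n=0$. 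For the inductive step, apply Lemma \ref{lem:fun} to the pair $(M_n,M_{n+1})$ with $X\leftarrow X_n$, $\delta\leftarrow n+1$, flux extension $p|_{\mathcal{ H}_1(M_{n+1},\z)}$ and tolerance $\epsilon_{n+1}$; the required hypothesis $X_n(\partial(M_n))\subset\Pi^*_{n+1}(\theta)\cup\Pi^*_{n+1}(-\theta)$ is just (b), and the conclusion provides $X_{n+1}$ satisfying (a)--(c) at level $n+1$ (after shrinking $\epsilon_{n+1}$ slightly to extend the containment up to $\partial(M_n)$). I would additionally shrink $\epsilon_{n+1}$, via Cauchy estimates applied to the Weierstrass data of $X_n$ on a neighbourhood of $M_{n-1}$, so that the intrinsic metric of $X_{n+1}$ on $M_{n-1}$ is at least half that of $X_n$; this is the standard precaution to keep the limit metric non-degenerate.

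Uniform convergence of $\{X_n\}$ on every $M_k$ follows from (c) via $\|X_n-X_m\|\le\sum_{j>m}\epsilon_j$ for $n>m\ge k$; Cauchy estimates on the holomorphic Weierstrass data then yield $C^\infty$ convergence on compact subsets of the interior of $\mathcal{ N}$, so the limit $X:\mathcal{ N}\to\r^3$ is conformal and harmonic, the auxiliary metric control guarantees it is an immersion, and convergence of periods gives $p_X=p$. For the properness/positivity property, any $q\in \overline{M_n-M_{n-1}^\circ}$ with $n\ge 1$ satisfies $(x_3+\tan\theta|x_1|)\circ X_n(q)>n$ by (c), and the subsequent total perturbation has norm at most $\sum_{j>n}\epsilon_j<1$, so $(x_3+\tan\theta|x_1|)\circ X(q)>n-1$, which is positive for $n\ge 2$ and proper as $n\to\infty$. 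Positivity on $M=M_0$ follows from $(x_3+\tan\theta|x_1|)\circ Y>1$ combined with $\|X-Y\|\le\epsilon'\le 1/2$, and $\|X-Y\|<\epsilon'\le\epsilon$ on $M$ is the approximation bound. The only subtle point is preventing degeneracy of the limit metric, which is handled by the extra smallness imposed on $\epsilon_{n+1}$ at each step.
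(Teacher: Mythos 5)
Your proof is essentially identical to the paper's: the same exhaustion-and-induction scheme driven by Lemma \ref{lem:fun} with $\delta$ growing linearly in $n$, the same flux bookkeeping $p_{X_n}=p|_{\mathcal{H}_1(M_n,\z)}$, and the same properness estimate read off the images of the annuli $M_n-M_{n-1}$. The only divergence is in how non-degeneracy of the limit is secured: where you impose extra smallness on $\epsilon_{n+1}$ via Cauchy estimates to bound the limit metric from below, the paper instead invokes Hurwitz's theorem on the $\omega$-convergent Weierstrass data to conclude that $X$ either degenerates to a point (excluded since $\|X-Y\|<\epsilon$ with $Y$ non-flat) or has no branch points.
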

\begin{proof} Without loss of generality, we can assume that $\epsilon<1$ and $\theta \in (0,\pi/4).$

Let $\{M_n\,|\; n \in \n\}$ be an exhaustion  of $\mathcal{ N}$ by Runge compact regions with analytical boundary satisfying that $M_1=M$ and $M_n \subset M_{n+1}^\circ$ $\forall n\in\n.$

Label $Y_1=Y,$ and by Lemma \ref{lem:fun} and  an inductive
process, construct a sequence $\{Y_n\}_{n \in \n}$ of minimal
immersions and a sequence $\{\epsilon_n\}_{n\in\n}$ of positives satisfying that
\begin{enumerate}[(a)]
  \item $Y_n \in \mathcal{ M}(M_n)$ for all $n \in \n,$
  \item $\|Y_{n}-Y_{n-1}\|_{1,M_{n-1}}<\epsilon_n$ for all $n \geq 2,$ where
\[
\epsilon_n= \frac1{2^n}\min\left\{ \epsilon\,,\, \min\left\{\|\partial Y_k\|_{0,M_k}\;|\; k=1,\ldots,n-1\right\}\right\}>0
\]
(notice that $\|\partial Y_k\|_{0,M_k}>0$ since $Y_k$ is an immersion),
  \item $p_{Y_n}=p|_{\mathcal{ H}_1(M_n,\z)}$ for all $n \in \n,$
  \item $ Y_{n}(\partial(M_{n}))\subset \Pi_n(\theta) \cup \Pi_n(-\theta)$   and $ Y_{n}(M_{n}-M_{n-1})\subset \Pi_{n-1}(\theta) \cup \Pi_{n-1}(-\theta)$  for all $n \geq 2.$
\end{enumerate}

By items $(a)$ and $(b)$ and Harnack's theorem, $\{Y_n\}_{n\in
\n}$ uniformly converges on compact subsets of $\mathcal{ N}$ to a
conformal minimal (possibly branched) immersion  $X:\mathcal{ N} \to
\r^3.$ 

Let us check that $(x_3+\tan (\theta) |x_1|)\circ X$ is positive and proper.
Indeed, from (b) one has  $\|X-Y_n\|_{1,M_n} \leq \epsilon/2^n$ for all $n.$ In
particular from $(d)$ we have that $(x_3+ \tan(\theta) |x_1|)
\circ X \geq n-1-\epsilon/2^{n-1}$ on $M_{n}-M_{n-1},$ for all $n \geq 2.$
On the other hand, $(x_3+ \tan(\theta) |x_1|) \circ X \geq
1-\epsilon>0$ on $M_1,$  and so $(x_3+\tan (\theta) |x_1|)\circ X$ is a
positive proper function on $\mathcal{ N}.$

To show that $X$ is an immersion it suffices to check that $\|\partial X\|_{0,M_m}>0$ for all $m\in\n.$ Taking into account (b),
\begin{eqnarray*}
\|\partial X\|_{0,M_m} & \geq & \|\partial Y_{m}\|_{0,M_m}-\sum_{k>m} \|\partial Y_k-\partial Y_{k-1}\|_{0,M_m}\\
 & \geq & \|\partial Y_{m}\|_{0,M_m}-\sum_{k>m} \|Y_k-Y_{k-1}\|_{1,M_{k-1}}\\
 & > &  \|\partial Y_{m}\|_{0,M_m}-\sum_{k>m} \epsilon_k\\
 & > &  \|\partial Y_{m}\|_{0,M_m}-\sum_{k>m} \frac{1}{2^k}\|\partial Y_{m}\|_{0,M_m}\\
 & = & (1-\sum_{k>m}\frac1{2^k})\|\partial Y_{m}\|_{0,M_m}>0.
\end{eqnarray*}

Finally,  item $(c)$ gives that $p_X=p$ and we are done.
\end{proof}

\section{Proper Minimal Surfaces in Regions with Sublinear
Boundary}\label{sec:sub}

The main goal of this section is to prove the existence of proper hyperbolic minimal surfaces with non-empty boundary in $\r^3$ contained in the region above a negative sublinear graph.

Throughout this section $\mathcal{ N}$ will be the complex plane $\c.$

\begin{theorem} \label{th:sub}
Let $C$ denote the set $[-1,1] \times (0,1] \subset \r^2\equiv \c$ endowed with the conformal structure induced by $\c.$ 

Then there exists  $X \in \mathcal{ M}(C)$ satisfying that:
\begin{enumerate}[{\rm (1)}]
  \item $(x_1,x_3)\circ X:C \to \r^2$ is proper.
  \item If we set $f:C \to (-\infty,0],$ $f:=\min \{\frac{x_3\circ X}{|x_1\circ X|+1},0\},$ then $f=0$ on $(x_1\circ X)^{-1}((-\infty,0]),$ and $\lim_{n \to \infty} f(P_n)=0$ for any divergent sequence $\{P_n\}_{n \in \n}$ in $C.$
\end{enumerate}
\end{theorem}

\begin{proof} Let $D_n$ denote the rectangle $[-2,2] \times
[\frac{1}{n+1},2]\subset \r^2\equiv \c,$ $n \in \n.$ Label also
$D=[-2,2] \times [0,2].$ 

The immersion $X$ will be constructed recursively. Let us show the following

\begin{lemma} \label{lem:cu}
Fix $\epsilon_1\in(0,1).$ There exists a sequence of non-flat $X_n\in \mathcal{ M}(D),$ $k \in \n,$ such
that
\begin{enumerate}[{\rm (i)}]
\item $\|X_n-X_{n-1}\|_{1,D_{n-1}}<\epsilon_n,$ where
\[
\epsilon_n= \frac1{2^n}\min\left\{\epsilon_1\;,\;\min\left\{\big\|\frac{\partial X_k}{dz}\big\|_{0,D_k}\;|\; k=1,\ldots,n-1\right\}\right\}>0,
\]
for all $n \geq 2.$
\item $X_n([-2,2] \times \{\frac{1}{n+1}\}) \subset \Pi_{n}(\frac{1}{n}).$
\item $X_n(D_n-D_{n-1}) \subset \Pi_{n-1}(\frac{1}{n-1})\cup \Pi_n(\frac1{n})$  for all $n\geq 2.$
\item If $P\in D_n$ and  $(x_1\circ X_n)(P)<0,$ then $(x_3\circ X_n)(P)>1-\sum_{k=2}^n\epsilon_k>0.$ 
\end{enumerate}
\end{lemma}
\begin{proof}
Let us construct the sequence inductively. Take any non-flat $X_1\in
\mathcal{ M}(D)$ satisfying that $X_1(D_1) \subset \Pi_1(1).$ Notice that $X_1$ fulfills (ii) and (iv) whereas (i) and (iii) make no sense for $n=1.$ Assume there exists a non-flat immersion $X_{n-1},$ $n\geq 2,$
satisfying (i), (ii), (iii) and (iv), and let us construct
$X_{n}.$

Denote by $L:\r^3\to\r^3$ the rotation of angle $\frac1{n-1}$ around
the straight line parallel to the $x_2$-axis and containing the
point $(0,0,n-1).$ Notice that 
\begin{equation}\label{eq:giro2}
\text{$L(\Pi_{n-1}(\frac1{n-1}))=\Pi_{n-1}(0)\;$ and $\;L(\Pi_n(\frac1{n}))=\Pi_\zeta(-\frac{1}{n(n-1)}),$}
\end{equation}
 where
$\zeta=n-1+\cos(1/n) {\rm sec}(1/(n^2-n))$.

Call $Y=(Y_j)_{j=1,2,3}:=L\circ X_{n-1}\in \mathcal{ M}(D).$  From
(ii) and \eqref{eq:giro2}, we have
\begin{equation}\label{eq:x3oY}
Y([-2,2] \times \{{1}/{n}\})\subset \Pi_{n-1}(0).
\end{equation}

By continuity and equation (\ref{eq:x3oY}), there exists $\mu\in
(\frac1{n+1},\frac1{n}),$ close enough to $1/n$ so that
\begin{equation}\label{eq:x3oY''}
Y(\Theta)\subset \Pi_{n-1}(0),\quad \text{where $\Theta:=[-2,2]\times
[\mu,1/{n}],$}
\end{equation}
that is to say, $Y_3>n-1$ on $\Theta.$

Denote by $\Delta:=[-2,2]\times [0,\mu].$ Notice that
$D_n-D_{n-1}\subset \Delta\cup\Theta,$  and
$\emptyset=D_{n-1}\cap\Delta=D_{n-1}^\circ\cap\Theta^\circ
=\Theta^\circ\cap \Delta^\circ$ (see Figure \ref{fig:cuadrado}).

\begin{figure}[ht]
    \begin{center}
    \scalebox{0.40}{\includegraphics{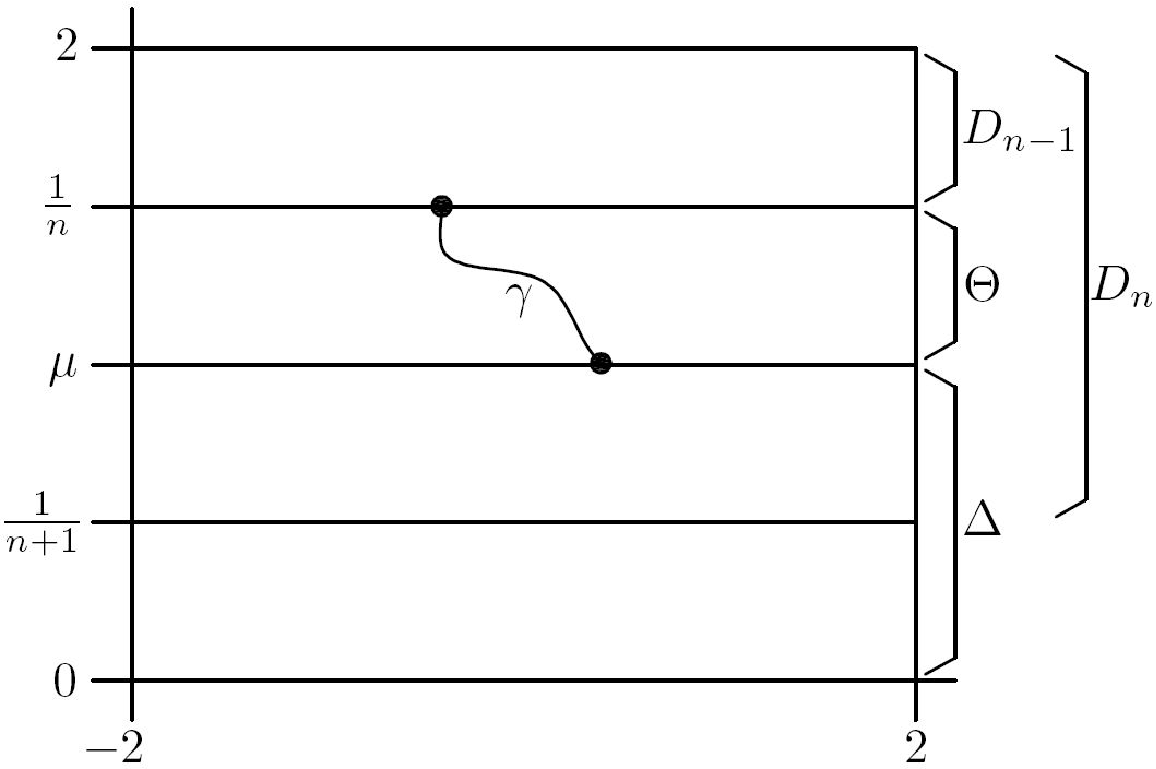}}

        \end{center}
        \vspace{-0.5cm}
\caption{The sets in $D$}\label{fig:cuadrado}
\end{figure}

Since $-\frac1{n(n-1)}\in (-\pi/2,0)$ and $Y(\Delta)$ is compact, then there exists $\lambda>0$ such that
\begin{equation}\label{eq:lam}
(-\lambda,0,0)+Y(\Delta) \subset \Pi_{\zeta}\big(-\frac{1}{n(n-1)}\big).
\end{equation}

The key idea to construct $X_n$ is similar to the one in (the second step of) the proof of Lemma \ref{lem:car0}. We deform $Y$ by pushing $(x_1,x_3)\circ Y (\Delta)\subset\r^2$ to the left in the direction of the $x_1$-axis a distance $\lambda,$ while preserving $Y_3$ on $D$  and hardly modifying $Y$ on $D_{n-1}.$ In this way we obtain a new immersion $Z\in \Mcal (D)$ such that $x_3\circ Z=Y_3$ on $D$ and $Z(\Delta)\subset L(\Pi_n(\frac1{n})).$   By \eqref{eq:x3oY''} and \eqref{eq:lam},  $X_n:= L^{-1} \circ Z$  will satisfy the desired properties. No matter the values of both $x_2\circ Z$ on $\Theta\cup\Delta$ and $x_1\circ Z$ on $\Theta.$

Consider $\gamma$ an analytic Jordan arc on $\Theta$ with
endpoints $Q_1\in  \partial(D_{n-1})$ and $Q_2\in \partial(\Delta)$
and otherwise disjoint from $\partial (\Theta),$ and meeting transversally $D_{n-1}$ and $\Delta$ (see Figure \ref{fig:cuadrado}). Moreover, we choose $\gamma$ so that $\partial Y_3$ never vanishes on $\gamma.$ Denote by
$\Lambda$ the admissible subset $\Lambda:=D_{n-1}\cup\gamma\cup\Delta$ in $\c$ and consider
$F_\varpi\in\mathcal{M}_\ggot^*(\Lambda),$ where $F=(F_j)_{j=1,2,3},$ satisfying
\begin{enumerate}[(A)]
\item $F=Y$ on $D_{n-1},$
\item $F_1=Y_1-\lambda$ on $\Delta,$
\item $F_3=Y_3$ and $(\partial F_\varpi)_3=\partial Y_3$ on $\Lambda.$
\end{enumerate}
%
%
The existence of $F_\varpi$ follows by similar arguments to those used in Claim \ref{ass:inter}.

Let $W\subset  \c$ be an open topological disc containing $D,$ and without loss of generality, suppose that $\partial Y_3$ extends holomorphically to $W.$
We can apply Theorem \ref{co:immaprox} to the data $W,$
$S=\Lambda,$  $F_\varpi$ and a $\xi\in(0,\epsilon_n)$ to obtain 
$Z=(Z_k)_{k=1,2,3}\in
\mathcal{M}(D)$ such that 
$\|Z- F_\varpi\|_{1,\Lambda}<\xi$  and $Z_3=F_3=Y_3.$ Then,
\begin{itemize}
\item $Z(\Theta)\subset \Pi_{n-1}(0)$ (take into account \eqref{eq:x3oY''} and that $Z_3=Y_3$),
\end{itemize}
and, if $\xi$ is chosen small enough, 

\begin{itemize}
\item $\|Z-Y\|_{1,D_{n-1}}<\epsilon_n.$
\item $Z(\Delta)\subset \Pi_\zeta(-\frac{1}{n(n-1)}).$ Use \eqref{eq:lam} and (B).
\item If $P\in D_{n-1}$ and $Z(P)\in L(\{x_1<0\}),$ then $Z(P)\in L(\{x_3>1-\sum_{k=2}^{n-1}\epsilon_k\}).$ Use (iv) and the induction hypothesis.
\end{itemize}

Define $X_n:=L^{-1}\circ Z\in\mathcal{ M}(D).$ From \eqref{eq:giro2} and translating the above
properties, we get
\begin{enumerate}[(a)]
\item $\|X_n-X_{n-1}\|_{1,D_{n-1}}<\epsilon_n.$
\item $X_n(\Theta)\subset \Pi_{n-1}(\frac{1}{n-1}).$
\item $X_n(\Delta)\subset \Pi_n(\frac{1}{n}).$
\item If $P\in D_{n-1}$ and $(x_1\circ X_n)(P)<0,$ then $(x_3\circ X_n)(P)>1-\sum_{k=2}^{n-1}\epsilon_k.$
\end{enumerate}

Property (a) directly gives (i). Since $[-2,2] \times
\{\frac{1}{n+1}\}\subset\Delta,$ (c) implies (ii). Taking into
account that $D_n-D_{n-1}\subset \Theta\cup\Delta,$ (iii) follows
from (b) and (c). Finally, (a) and (d) (respectively, (b) and (c)) give (iv) for points $P\in D_{n-1}$ (respectively, $P\in \Theta\cup\Delta$). 
\end{proof}

From (i) and Harnack's theorem, the sequence $\{X_n\}_{n\in\n}$
uniformly converges on compact sets of $(-2,2)
\times (0,2)$ to a
conformal minimal (possibly branched) immersion $\hat{X}:(-2,2)
\times (0,2)\to\r^3.$ From (i) and reasoning as in the proof of Theorem \ref{th:fun}, we deduce that $\hat{X}$ is an immersion and $X:=\hat{X}|_C\in\mathcal{
M}(C).$ 

Let us check that $X$ satisfies item (1). Denote by $C_n=[-1,1] \times [\frac{1}{n+1},1]\subset\c,$
$n\in\n.$ From (iii) we get that $\|(x_1,x_3)\circ X_n\|_{0,C_n-C_{n-1}^\circ}\geq \dist_{\r^3}(0,\Pi_{n-1}(\frac1{n-1})\cup \Pi_n(\frac1{n})).$ Then (i) gives $\|(x_1,x_3)\circ X\|_{0,C_n-C_{n-1}^\circ}\geq  \dist_{\r^3}(0,\Pi_{n-1}(\frac1{n-1})\cup \Pi_n(\frac1{n}))-\epsilon_1.$ Since $\lim_{n \to \infty} \dist_{\r^3}(0,\Pi_{n-1}(\frac1{n-1})\cup \Pi_n(\frac1{n}))=\infty,$ we infer that $(x_1,x_3)\circ X:C\to\r^2$ is proper.

Finally, let us show that $X$ satisfies item (2).
Consider $P\in C$ such that $(x_1\circ X)(P) <0.$ For $n$ large
enough, $P\in C_n\subset D_n$ and $(x_1\circ X_n)(P)<0$ as well.
Therefore (iv) gives $(x_3\circ X)(P)=\lim_{n\to\infty}(x_3\circ X_n)(P)\geq 1-\epsilon_1>0,$ and so 
$f(P)=0.$ Finally, consider  a divergent
sequence $\{P_n\}_{n\in\n}$ in $C$ with $(x_1\circ X)(P_n)\geq 0.$ For any $n\in\n$ we label $k(n)\in\n$ as the
natural number such that $P_n\in C_{k(n)}-C_{k(n)-1}$ and note that $\{k(n)\}_{n\in\n}$ is divergent. From (i), (iii) and the fact $(x_1\circ X)(P_n)\geq 0,$ one has
$(x_3+\tan(\frac1{k(n)-1})x_1)(X(P_n))> k(n)-2\epsilon_1.$
Hence, for $n$ large enough,
\begin{eqnarray*}
0\geq f(X(P_n))& \geq & \min\left\{
\frac{k(n)-2\epsilon}{x_1(X(P_n))+1}-\tan
\left(\frac1{k(n)-1}\right)\frac{x_1}{x_1+1}(X(P_n))\,,\,0\right\}\\
& \geq & -\tan
\left(\frac1{k(n)-1}\right),
\end{eqnarray*}
which converges to $0$ as $n$ goes to $\infty.$ This shows (2) and concludes the proof.
\end{proof}

By Caratheodory's Theorem, the set $C$ in the above Theorem is biholomorphic to the half disc $\overline{\d}_+,$ which corresponds to the statement of Theorem II in the introduction.

\end{document}